\numberwithin{equation}{section}
\newtheorem{theorem}{Theorem}[section]
\newtheorem{lemma}{Lemma}[section]
\newtheorem{remark}{Remark}[section]
\DeclareMathOperator{\sgn}{sgn}
\numberwithin{figure}{section}
\numberwithin{table}{section}
\newcommand{\ep}{\mathrm{e}}
\newcommand\btd{\raise 2pt \hbox{$\hat\bigtriangledown$}\hskip 1.5pt}
\newcommand\bt{\raise 2pt \hbox{$\bigtriangledown$}\hskip 1.5pt}
\begin{document}
\date{}
\title{Wave breaking for the Kakutani-Matsuuchi model  }
\author{ Shaojie Yang\thanks{Corresponding author:
shaojieyang@kust.edu.cn},~~~~Jianmin Zhao \\~\\
\small ~ Department of Systems Science and Applied Mathematics, \\
 \small~Kunming University of Science and Technology,  \\
\small ~Kunming, Yunnan 650500, China}

\date{}
\maketitle
\begin{abstract}
In this paper, we consider the Kakutani-Matsuuchi model which describes the surface elevation of the water-waves under the effect of viscosity. We show wave breaking for the Kakutani-Matsuuchi model, namely, the solution remains bounded but its slope becomes unbounded in finite time, the slope of the initial data is sufficiently negative.\\

\noindent\emph{Keywords}: 
Wave breaking; Kakutani-Matsuuchi model;  Viscosity effect.\\

\noindent\emph{Mathematics Subject Classification}:  35A21; 35B44; 35Q53
\end{abstract}
\noindent\rule{15.5cm}{0.5pt}

\section{Introduction}\label{sec1}
The motion of gravity water waves is a hot research topic that has attracted a lot of attention from a number of different researchers in mathematics, physics and engineering. When considering gravity water waves in deep water, a classical method is to take irrotational incompressible inviscid fluids \cite{r19}. However, the effect of viscosity on gravity water waves has also attracted attention to deal with more realistic models. Examples of the necessity of effect of viscosity on gravity water waves in experiments can refer to Ref.\cite{r1}. Various approaches have been developed to modeling the effect of viscosity on gravity waves. For example, in Ref.\cite{r1,r2}, authors have derived, independently, asymptotical models for long gravity waves on viscous water waves. In a recent paper, Dias, Dyachenko and Zakharov \cite{r4} derived a nonlinear model for the motion of a surface wave under gravity and viscous effects. For more properties of Dias-Dyachenko-Zakharov's model, the reader is referred to \cite{r5,r6,r8,r9,r10} and the references therein.

In the following, we recall the model equation which appeared in the seminal paper of Kakutani and Matsuuchi \cite{r7}. There are three models have been derived in Ref.\cite{r7} according to a competition between geometrical dispersion  and dispersion provided by the viscous boundary layer (the nonlocal term). Denoting $k$ as the wave number of the long wave, the different regimes read as follows.\\
\noindent$\bullet $ $\mu\ll k^5$, viscosity effects can be neglected, the model is the KdV equation
\begin{equation}
u_t+\frac{3}{2}uu_x+\frac{1}{6}u_{xxx}=0.
\end{equation}
\noindent$\bullet $ $\mu\sim k^5$, a balance between the geometrical and the viscous dispersion, the model reads
\begin{equation}
u_t+\frac{3}{2}uu_x+\frac{1}{6}u_{xxx}=\frac{1}{4\sqrt{\pi R^*}}\int_{\mathbb{R}} \frac{\left(1-\sgn(x-y)\right)\eta_y}{\sqrt{|x-y|}}.
\end{equation}
\noindent$\bullet $ $\mu \gg k^5$, a large viscous effect, the model reads
\begin{equation}\label{A1}
u_t+\frac{3}{2}uu_x=\frac{1}{4\sqrt{\pi R^*}}\int_{\mathbb{R}} \frac{\left(1-\sgn(x-y)\right)\eta_y}{\sqrt{|x-y|}}.
\end{equation}
In this paper, we consider Eq.\eqref{A1} which has a large viscous effect.  Eq.\eqref{A1} can be rewritten as 
\begin{equation}\label{A}
u_t+uu_x+\Lambda^{\frac{1}{2}}u+\mathcal{H}\Lambda^{\frac{1}{2}}u=0,
\end{equation}
where operators $\Lambda^{\frac{1}{2}}$  and $\mathcal{H}\Lambda^{\frac{1}{2}}$ can be written as 
\begin{equation*}
\Lambda^{\frac{1}{2}}f(x)=\text{p.v.}\int_{\mathbb{R}}\frac{f(x)-f(y)}{|x-y|^{\frac{3}{2}}}dy,
\end{equation*}
\begin{equation*}
\mathcal{H}\Lambda^{\frac{1}{2}}f(x)=\text{p.v.}\int_{\mathbb{R}}\frac{f(x)-f(y)}{|x-y|^{\frac{3}{2}}}\sgn(x-y)dy
\end{equation*}
up to multiplication by constants. Recently, Chen, Dumont and Goubet \cite{r17} derived decay of solutions to \eqref{A}.  Bae,  Lee and Shin \cite{r16} showed the formation of singularities of smooth solutions in finite time for a certain class of initial data. 

Note that the equation \eqref{A} can be regard as combination of the Whitham equation with fractional dispersion
\begin{equation}\label{B}
u_t+uu_x+\Lambda^{\frac{1}{2}}u=0
\end{equation}
and the Whitham equation with fractional diffusion
\begin{equation}\label{C}
u_t+uu_x+\mathcal{H}\Lambda^{\frac{1}{2}}u=0.
\end{equation}
Eq.\eqref{B} is an extension of the Burgers equation (also called the  fractal Burgers  equation) \cite{r21}. Finite time singularities and global well-posedness of Eq.\eqref{B} have studied in \cite{r22,r23}.  Eq.\eqref{C} arises as a quadratic approximation of the water wave problem on the moving surface of a two-dimensional, infinitely deep flow under gravity \cite{r11}.
VM Hur observed that \eqref{C} shares the dispersion relation and scaling symmetry in common with water waves in the infinite depth \cite{r12}, and derived the solution of  \eqref{C} blowup in finite time \cite{r12,r13,r20}.

The purpose of this paper is to show wave breaking for Eq.\eqref{A} by using the arguments in Ref.\cite{r13,r14,r15,r18}.  The idea in Ref.\cite{r13,r14,r15,r18} is to analysis ordinary differential equations for the solution and its derivatives of all orders along the characteristics, which by the way involve nonlocal forcing terms. The main difficulty comes from loss of derivatives when handling nonlocal forcing term in estimating the sup norm of gradient of the solution.  Below we state  main result of wave breaking for Eq.\eqref{A}. 
\begin{theorem}\label{b1}
For $\epsilon>0$ sufficiently small. If $u_0\in H^\infty (\mathbb{R})$ satisfies that
\begin{align}\label{a1}
\epsilon^2(-\inf_{x\in\mathbb{R}}u^\prime_0(x))^2>1+2\|u_0\|_{H^3(\mathbb{R})},
\end{align}
\begin{align}\label{c5}
\epsilon^2(1-\epsilon)^4(-\inf_{x\in\mathbb{R}}u^\prime_0(x))^{3/4}>28(1+(1+\ep^2+\ep^{1/g})g+g^2),
\end{align}
\begin{align}\label{d8}
\epsilon^2(-\inf_{x\in\mathbb{R}}u^\prime_0(x))^{1/4}>\frac{9}{4}\ep,
\end{align}
and
\begin{align}\label{b3}
\|u_0^{(n)}\|_{L^\infty(\mathbb{R})}\leq((n-1)g)^{2(n-1)}~~for~~n=2,3,...
\end{align}
for some $g\geq 1$, then the solution of the initial value problem associated with \eqref{A} and $u(x, 0)=u_0$ exhibits
wave breaking, i.e.
\begin{align}
\notag |u(x)|<\infty~~~for~~any~~x\in \mathbb{R}~~ for~any~~t\in[0,T),
\end{align}
but
\begin{align}
\notag \inf_{x\in\mathbb{R}}u_x(t,x)\rightarrow -\infty ~~~as~~t\rightarrow T^{-},
\end{align}
for some $T>0$. Moreover
\begin{align}\label{g6}
-\frac{1}{1+\epsilon}\frac{1}{\inf\limits_{x\in \mathbb{R}}u_0^\prime(x)}<T<-\frac{1}{(1-\epsilon)^2}\frac{1}{\inf\limits_{x\in \mathbb{R}}u_0^\prime(x)}.
\end{align}
\end{theorem}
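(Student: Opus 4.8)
The plan is to follow the characteristic-ODE strategy of \cite{r13,r14,r15,r18}: track $u$ together with all of its spatial derivatives along the flow of the velocity field, reduce matters to an infinite hierarchy of ordinary differential equations carrying nonlocal forcing terms, and then extract a Riccati-type blow-up for the slope while keeping every other quantity finite. \emph{Preliminaries.} Since $u_0\in H^\infty(\mathbb{R})$, \eqref{A} admits a unique maximal solution $u\in C([0,T_{\max});H^\infty(\mathbb{R}))$; because the Fourier multiplier of $\Lambda^{1/2}$ is real and nonnegative while that of $\mathcal{H}\Lambda^{1/2}$ is purely imaginary and odd (hence skew-adjoint), these operators contribute a nonpositive, respectively vanishing, term to the $L^2$ and $H^s$ energy identities, so that $\|u(t)\|_{L^2}$ is non-increasing and $\frac{d}{dt}\|u\|_{H^s}^2\lesssim_s\|u_x\|_{L^\infty}\|u\|_{H^s}^2$. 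Consequently $T_{\max}<\infty$ forces $\int_0^{T_{\max}}\|u_x(t)\|_{L^\infty}\,dt=\infty$, and it is enough to produce a finite time $T$ with $\inf_{x}u_x(t,x)\to-\infty$ as $t\to T^-$. Boundedness of $u$ for $t<T$ is then automatic, and $\sup_{t<T}\|u(t)\|_{L^\infty}<\infty$ will follow from $\frac{d}{dt}\|u(t)\|_{L^\infty}\le\|(\Lambda^{1/2}+\mathcal{H}\Lambda^{1/2})u\|_{L^\infty}\lesssim\|u\|_{L^\infty}^{1/2}\|u_x\|_{L^\infty}^{1/2}$ once $\|u_x(t)\|_{L^\infty}=O((T-t)^{-1})$ is known, since then $\int_0^T\|u_x(t)\|_{L^\infty}^{1/2}\,dt<\infty$.

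\emph{Characteristics and the hierarchy.} Let $X(t;x_0)$ solve $\dot X=u(t,X)$, $X(0;x_0)=x_0$ — a diffeomorphism of $\mathbb{R}$ for $t<T_{\max}$ — and set $v_n(t;x_0)=\partial_x^nu(t,X(t;x_0))$ and $M_n(t)=\|\partial_x^nu(t,\cdot)\|_{L^\infty}=\sup_{x_0}|v_n(t;x_0)|$. Writing $L:=\Lambda^{1/2}+\mathcal{H}\Lambda^{1/2}$ and differentiating \eqref{A} in $x$, the chain rule yields
\begin{gather*}
\dot v_0=-(Lu)(t,X),\qquad \dot v_1=-v_1^2-(Lu_x)(t,X),\\
\dot v_n=-\sum_{k=1}^{n}\binom{n}{k}v_k v_{n+1-k}-(Lu^{(n)})(t,X)\qquad(n\ge2).
\end{gather*}
The quadratic part is the Burgers hierarchy; the terms $(Lu^{(n)})(t,X)$ are the source of the loss of derivatives. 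The key analytic input is that, splitting the principal-value integrals defining $\Lambda^{1/2}f$ and $\mathcal{H}\Lambda^{1/2}f$ at a scale $\rho>0$, discarding the linear Taylor term by oddness on $\{|x-y|<\rho\}$, bounding the outer region by the sup norm, and optimizing in $\rho$, one obtains $\|Lf\|_{L^\infty}\lesssim\|f\|_{L^\infty}^{1/2}\|f'\|_{L^\infty}^{1/2}$ and $\|Lf\|_{L^\infty}\lesssim\|f\|_{L^\infty}^{3/4}\|f''\|_{L^\infty}^{1/4}$; in particular $|(Lu^{(n)})(t,X)|\lesssim M_n^{1/2}M_{n+1}^{1/2}$ and $|(Lu_x)(t,X)|\lesssim M_1^{3/4}M_3^{1/4}$. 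Feeding the first estimate into the hierarchy taken over all $x_0$ and starting from \eqref{b3}, I would then propagate inductively in $n$ a bound $M_n(t)\le\big((n-1)g(t)\big)^{2(n-1)}$ for $n\ge2$, in which the nonlocal forcing is only a lower-order perturbation of the classical Burgers-hierarchy estimate and the deteriorating parameter $g(t)$ is controlled explicitly on the relevant time interval; this is where $g$ and the thresholds \eqref{c5}, \eqref{d8} enter, and it supplies in particular a quantitative bound on $\|u_{xx}(t)\|_{L^\infty}$ and $\|u_{xxx}(t)\|_{L^\infty}$ up to the breaking time.

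\emph{Riccati blow-up.} Pick $x_0^*$ with $u_0'(x_0^*)=\inf_{x}u_0'(x)=:-m_0$ (attained since $u_0'\in H^\infty$) and set $w(t)=v_1(t;x_0^*)$, so that $w(0)=-m_0$ and $\dot w=-w^2+F(t)$ with $F(t)=-(Lu_x)(t,X(t;x_0^*))$, $|F(t)|\lesssim M_1(t)^{3/4}M_3(t)^{1/4}$. I would run a continuity argument on the maximal subinterval of $[0,T_{\max})$ on which $|F(t)|\le\epsilon\,w(t)^2$: there the comparison $-(1+\epsilon)w^2\le\dot w\le-(1-\epsilon)w^2$ keeps $w<0$ and forces $w(t)\to-\infty$ at a finite time $T$ with $\frac{1}{(1+\epsilon)m_0}\le T\le\frac{1}{(1-\epsilon)^2m_0}$, which is \eqref{g6}, and it delivers the lower bound $|w(t)|\ge m_0/(1-(1-\epsilon)m_0t)$. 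Inserting this lower bound, the control of $M_3(t)$ just described, and the hypotheses \eqref{a1}, \eqref{c5}, \eqref{d8} into the estimate for $F$ shows $|F(t)|<\epsilon\,w(t)^2$ \emph{strictly}, which closes the bootstrap — the role of \eqref{a1}, which involves $\|u_0\|_{H^3}$, being to make the strict inequality hold at $t=0$. Combined with $\inf_x u_x(t,x)\le w(t)$, the boundedness from the preliminaries, and the breaking criterion (which then gives $T=T_{\max}$), this is exactly the conclusion of Theorem~\ref{b1}.

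\emph{Main obstacle.} The genuine difficulty is the propagation of the Gevrey-type bound \eqref{b3}: because $L$ loses half a derivative, the $v_1$-equation cannot be closed without controlling $M_3$, hence inductively the whole tower $\{M_n\}_{n\ge2}$, and one must carry \eqref{b3} forward with a handle on the growth of $g(t)$ sharp enough — relative to the Riccati blow-up time $\approx1/m_0$ — that $F(t)$ remains a small fraction of $w(t)^2$ all the way up to $T$. It is precisely this competition between the (controlled but genuine) growth of the higher-order norms and the Riccati growth of $-w$ that forces the three stringent conditions \eqref{a1}, \eqref{c5} and \eqref{d8}.
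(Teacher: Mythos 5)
Your outline coincides with the paper's strategy (characteristics, the hierarchy of ODEs for $v_n$ with nonlocal forcing, kernel-splitting bounds of the type \eqref{b7}, a Gevrey-type tower of derivative bounds propagated by a continuity argument, and a Riccati comparison closed by a bootstrap on the forcing in the $v_1$-equation), so the issue is not the route but a missing step that the route cannot avoid. The crux you gloss over is the induction at $n=2$. Your claim that the nonlocal forcing is ``only a lower-order perturbation of the classical Burgers-hierarchy estimate'' misidentifies the enemy: along a characteristic lying in the set $\Sigma_{1/3}$ of \eqref{d9}, the stretching term $-3v_1v_2$ in \eqref{d4} has $v_1\approx m(t)$, so a Gr\"onwall-type estimate lets $\|u_{xx}\|_{L^\infty}$ grow like $q^{-3}(t)$, whereas closing the bootstrap on $|K_1+\phi_1|\le\epsilon^2m^2(t)$ (your $|F|\le\epsilon w^2$) requires the second derivative to grow no faster than $q^{-1-\sigma}(t)$ with $\sigma<2$ as in \eqref{c8}; indeed the very interpolation you invoke, $|Lu_x|\lesssim M_1^{3/4}M_3^{1/4}$ or the paper's $M_1,M_2$ splitting, forces the tower exponents to stay strictly below the Riccati rate. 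For $n\ge3$ the ratio $(n+1)/(1+(n-1)\sigma)$ is less than one and the naive argument closes (this is the computation around \eqref{d1}, which also needs the monotonicity of the sets $\Sigma_\gamma$, Lemma \ref{a7}, to compare $v_1$ along the relevant characteristic with $m(t)$ — another ingredient absent from your plan and also needed for the two-sided bound \eqref{g6}); for $n=2$ it is not, and the paper must argue entirely differently, exploiting the smoothing of the flow map: the ODEs \eqref{e7}--\eqref{e5} for $\partial_xX,\partial_x^2X,\partial_x^3X$, the integral identities \eqref{f8}--\eqref{e6} obtained by integrating \eqref{c2} and differentiating in the Lagrangian label, the bounds \eqref{e9}, \eqref{f3}, \eqref{f4}, and finally the identity $v_3(T_2;x_2)\partial_xX(T_2;x_2)=0$ at the maximizer of $|v_2|$ to solve for $v_2^2$. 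Without this (or an equivalent device), the ansatz $M_n\le((n-1)g(t))^{2(n-1)}$ with ``$g(t)$ controlled explicitly'' simply does not propagate past $n=2$, and the bootstrap on $F$ cannot be closed.

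Two smaller points. First, your derivation of the second-order interpolation $\|Lf\|_{L^\infty}\lesssim\|f\|_{L^\infty}^{3/4}\|f''\|_{L^\infty}^{1/4}$ ``by oddness'' fails for the $\mathcal{H}\Lambda^{1/2}$ part, whose kernel $\sgn(x-y)|x-y|^{-3/2}$ is odd, so the linear Taylor term does not cancel; the estimate is still true (combine the first-order splitting with Landau's inequality $\|f'\|_\infty\lesssim\|f\|_\infty^{1/2}\|f''\|_\infty^{1/2}$), but the paper avoids the issue by never optimizing: it keeps the two-parameter bound \eqref{b7} and chooses $\delta$ as an explicit power of $q(t)$, which is what produces the precise exponents $q^{-1-\frac{\sigma}{2}-(n-1)\sigma}$ needed in \eqref{d7} and \eqref{c7}. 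Second, your Riccati step tracks a single characteristic $x_0^*$, which suffices for blow-up and the upper bound on $T$, but the lower bound in \eqref{g6} requires the comparison uniformly over near-minimizing characteristics, i.e., the control of $m(t)$ itself via $r$ and $q$ as in Lemma \ref{c4} and \eqref{f1}; this again rests on Lemma \ref{a7}.
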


\begin{remark}
The hypotheses  \eqref{a1}-\eqref{d8} require that $u_0'$ be sufficiently negative somewhere in $\mathbb{R}$. The idea of the proofs lies in that the profile of u steepens until it becomes vertical in finite time.
\end{remark}

\section{Proof of Theorem \ref{b1}}

Assume that the initial value problem associated with \eqref{A} and $u(\cdot,x)=u_0$ possesses a unique solution in $C^\infty([0,T);H^\infty(\mathbb{R}))$
for some $T>0$. As a matter of fact, one may combine an a priori bound and a compactness argument to work out the local-in-time well-posedness in $H^s(\mathbb{R})$
for $s>3/2$ (see Ref.\cite{r24} for details). Assume that $T$ is the maximal existence time.

For $x\in \mathbb{R}$,  let $X(t,x)$ solve 
\begin{equation}\label{e3}
\begin{cases}
\frac{dX}{dt}(t,x)=u(X(t;x),t),\\
X(0;t)=x.
\end{cases}
\end{equation}
Since $u(t,x)$ is bounded and satisfies a Lipschitz condition in $x$ for any  $(t,x)\in [0,T)\times\mathbb{R}$,  it follows from the classical ODE theory that $X(\cdot;t)$ exists throughout the interval $[0,T)$ for any $x\in \mathbb{R}$. Furthermore,  $x\mapsto X(\cdot;t)$ is continuously differentiable throughout the interval $(0,T)$ for any $x\in \mathbb{R}$.

Let
\begin{align}\label{e4}
v_n(t;x)=(\partial_x^nu)(X(t;x),t)~~{\rm for}~~n=0, 1, 2, ...
\end{align}
and 
\begin{align}\label{a6}
m(t)=\inf_{x\in \mathbb{R}}v_1(t;x)=\inf_{x\in \mathbb{R}}(\partial_xu)(X(t;x),t)=:m(0)q^{-1}(t).
\end{align}
Apparently
\begin{align}
m(t)<0~~{\rm for~any}~~t\in [0,T),~~q(0)=1~~{\rm and}~~q(t)>0~~{\rm for~any}~~t\in [0,T).
\end{align}
Indeed $m(t)\geq 0$ would imply that $u(\cdot,t)$ be non-decreasing in $\mathbb{R}$, and hence $u(\cdot, t)\equiv0$.
For $x\in \mathbb{R}$, differentiating \eqref{A} with respect to $x$ and evaluating at $x=X(t;x)$, we have
\begin{equation}\label{d4}
\frac{dv_n}{dt}+\sum^n_{j=1}\binom{n}{j}v_jv_{n+1-j}+K_n(t; x)+\phi_n(t; x)=0~~for~~n=2,3,...,
\end{equation}
\begin{equation}\label{a3}
\frac{dv_1}{dt}+v_1^2+K_1(t; x)+\phi_1(t; x)=0
\end{equation}
and
\begin{align}\label{c2}
\frac{dv_0}{dt}+K_0(t; x)+\phi_0(t; x)=0,
\end{align}
where$\binom{n}{j}$ means a binomial coefficient and
\begin{align}
\notag K_n(t;x)=&(\mathcal{H}\Lambda^{\frac{1}{2}}\partial_x^nu)(X(t;x),t)\\
\notag =&\int_{-\infty}^{+\infty}\frac{\sgn(X(t;x)-y)}{|X(t;x)-y|^{\frac{3}{2}}}((\partial_x^nu)(X(t;x),t)-(\partial_x^nu)(y,t))dy,
\end{align}
\begin{align}
\notag \phi_n(t;x)=&(\Lambda^{\frac{1}{2}}\partial_x^nu)(X(t;x),t)\\
\notag =&\int_{-\infty}^{+\infty}\frac{(\partial_x^nu)(X(t;x),t)-(\partial_x^nu)(y,t)}{|X(t;x)-y|^{\frac{3}{2}}}dy
\end{align}
for $n = 0, 1, 2,...$. Let $\delta> 0$.   Splitting the integral and perform an integration by parts, one gets
\begin{align}
\notag |K_n(t;x)|=&\left|\left(\int_{|y|<\delta}+\int_{|y|>\delta}\right)\frac{\sgn(y)}{|y|^{\frac{3}{2}}}((\partial_x^nu)(X(t;x),t)-(\partial_x^nu)(X(t;x)-y,t))dy\right|\\
\notag \leq & \left|2\delta^{-\frac{1}{2}}((\partial_x^nu)(X(t;x)-\delta,t)-(\partial_x^nu)(X(t;x)+\delta,t))\right|\\
\notag &+2\left|\int_{|y|<\delta}\frac{(\partial_x^{n+1}u)(X(t;x)-y,t)}{|y|^{\frac{1}{2}}}\right|\\
\notag &+\left|\int_{|y|>\delta}\frac{\sgn(y)}{|y|^{\frac{3}{2}}}((\partial_x^nu)(X(t;x),t)-(\partial_x^nu)(X(t;x)-y,t))dy\right|\\
\notag \leq&12\delta^{-\frac{1}{2}}\|v_n(t)\|_{L^\infty}+8\delta^{\frac{1}{2}}\|v_{n+1}(t)\|_{L^\infty}
\end{align}
and
\begin{align}
\notag |\phi_n(t;x)|=&\left|\left(\int_{|y|<\delta}+\int_{|y|>\delta}\right)\frac{(\partial_x^nu)(X(t;x),t)-(\partial_x^nu)(X(t;x)-y,t)}{|y|^{\frac{3}{2}}}dy\right|\\
\notag\leq &\left|\int_{|y|<\delta}\frac{\sgn^2(y)}{|y|^{\frac{3}{2}}}((\partial_x^nu)(X(t;x),t)-(\partial_x^nu)(X(t;x)-y,t))dy \right|\\
\notag &+\left|\int_{|y|>\delta}\frac{(\partial_x^nu)(X(t;x),t)-(\partial_x^nu)(X(t;x)-y,t)}{|y|^{\frac{3}{2}}}dy\right|\\
\notag\leq &\left|\int_{|y|<\delta}\sgn(y)((\partial_x^nu)(X(t;x),t)-(\partial_x^nu)(X(t;x)-y,t))d(-\frac{2}{|y|^{\frac{1}{2}}})\right|\\
\notag &+8\delta^{-\frac{1}{2}}\|v_n(t)\|_{L^\infty}\\
\notag\leq &8\delta^{-\frac{1}{2}}\|v_n(t)\|_{L^\infty}+2\left|\frac{\sgn(y)}{|y|^{\frac{1}{2}}}((\partial_x^nu)(X(t;x),t)-(\partial_x^nu)(X(t;x)-y,t))\bigg|
_{-\delta}^{\delta}\right|\\
\notag &+2\left|\int_{-\delta}^{0}-\frac{(\partial_x^{n+1}u)(X(t;x)-y,t)}{|y|^{\frac{1}{2}}}dy\right|+2\left|\int_{0}^{\delta}\frac{(\partial_x^{n+1}u)(X(t;x)-y,t)}
{|y|^{\frac{1}{2}}}dy\right|\\
\notag\leq &16\delta^{-\frac{1}{2}}\|v_n(t)\|_{L^\infty}+8\delta^{\frac{1}{2}}\|v_{n+1}(t)\|_{L^\infty},
\end{align}
then we have
\begin{align}\label{b7}
|K_n(t;x)+\phi_n(t;x)|\leq28(\delta^{-\frac{1}{2}}\|v_n(t)\|_{L^\infty}+\delta^{\frac{1}{2}}\|v_{n+1}(t)\|_{L^\infty})
\end{align}
for $n = 0, 1, 2,...$ and for any $(t,x)\in[0,T)\times \mathbb{R}$.

Next, we shall show that
\begin{align}\label{g5}
|K_1(t;x)+\phi_1(t;x)|\leq\epsilon^2m^2(t)~~{\rm for~any}~~(t,x)\in[0,T)\times \mathbb{R}.
\end{align}
Note that \eqref{a1},  and using the Sobolev's inequality, one has
\begin{align}
|K_1(0;x)+\phi_1(0;x)|\leq|\mathcal{H}\Lambda^{\frac{1}{2}}u_0^\prime+\Lambda^{\frac{1}{2}}u_0^\prime|
\leq2\|u_0\|_{H^{2+}}<\epsilon^2m^2(0)~~{\rm for~any}~~x\in\mathbb{R}.
\end{align}
Suppose on the contrary that $|K_1(T_1;x)+\phi_1(T_1;x)|=\epsilon^2m^2(T_1)$ for some $T_1\in[0,T)$ for some $x\in\mathbb{R}$. By continuity, without loss of generality, we may assume that
\begin{align}\label{a2}
|K_1(t;x)+\phi_1(t;x)|\leq\epsilon^2m^2(t)~~{\rm for~any}~~t\in[0,T_1]~~{\rm for~any}~~(t,x)\in[0,T)\times \mathbb{R}.
\end{align}
We seek a contradiction.

\begin{lemma}\label{a7}
For $0<\gamma<1$ and for $t\in[0,T_1]$, let
\begin{align}\label{d9}
\Sigma_\gamma(t)=\{x\in\mathbb{R}: v_1(t;x)\leq(1-\gamma)m(t)\}.
\end{align}
If $0<\epsilon\leq\gamma<1/2$ for $\epsilon>0$ sufficiently small then $\Sigma_\gamma(t_2)\subset\Sigma_\gamma(t_1)$ whenever $0\leq t_1\leq t_2\leq T_1$.
\end{lemma}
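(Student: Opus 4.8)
The plan is to show that the defining inequality $v_1(t;x)\le(1-\gamma)m(t)$ is preserved forward in time, by comparing the evolution of $v_1$ along a characteristic that starts (or lies) in $\Sigma_\gamma$ with the evolution of $m(t)$ itself. The natural device is the quotient $q(t)=m(0)/m(t)$ introduced in \eqref{a6}: differentiating \eqref{a3} gives $\frac{dv_1}{dt}=-v_1^2-(K_1+\phi_1)$, and a standard envelope argument (the infimum of a family of solutions of a Riccati-type ODE with a forcing term controlled by \eqref{a2}) yields a differential inequality for $m(t)$, say $m'(t)\le -m^2(t)+\epsilon^2 m^2(t)$ from above and $m'(t)\ge -m^2(t)-\epsilon^2 m^2(t)$ from below, valid on $[0,T_1]$. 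Integrating these gives two-sided bounds on $q(t)$, in particular $q(t)\le 1$ and more precisely $(1-\epsilon)^2 \le q'(t)/(-m(0))$-type control, which is exactly the kind of estimate that later produces the blow-up time bounds \eqref{g6}.

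First I would fix $x\in\Sigma_\gamma(t_2)$ — actually it is cleaner to argue by contradiction on the complement: suppose $x\notin\Sigma_\gamma(t_1)$ but $x\in\Sigma_\gamma(t_2)$ for some $t_1<t_2$. Then the continuous function $t\mapsto v_1(t;x)-(1-\gamma)m(t)$ is positive at $t_1$ and $\le 0$ at $t_2$, so it has a first zero $t_*\in(t_1,t_2]$; at $t_*$ we have $v_1(t_*;x)=(1-\gamma)m(t_*)$ and $\frac{d}{dt}\big(v_1(t;x)-(1-\gamma)m(t)\big)\big|_{t_*}\le 0$. Now I would compute both derivatives at $t_*$: from \eqref{a3}, $v_1'(t_*;x)=-(1-\gamma)^2m^2(t_*)-(K_1+\phi_1)(t_*;x)$, and for $m$ I would use the upper Dini derivative bound $D^+m(t_*)\le -m^2(t_*)+\epsilon^2m^2(t_*)$ coming from \eqref{a2} applied at a point near where the infimum is attained. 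Substituting, the sign condition at $t_*$ forces
\[
-(1-\gamma)^2m^2(t_*)+\epsilon^2 m^2(t_*)\ge (1-\gamma)\big(-m^2(t_*)+\epsilon^2 m^2(t_*)\big),
\]
and dividing by $m^2(t_*)>0$ this reduces to a purely numerical inequality in $\gamma$ and $\epsilon$, namely $(1-\gamma)^2 - (1-\gamma) \le \epsilon^2\big(1-(1-\gamma)\big)$, i.e. $-\gamma(1-\gamma)\le \epsilon^2\gamma$, which is \emph{false} whenever $0<\epsilon\le\gamma<1/2$ and $\epsilon$ is small (the left side is $\le -\gamma/2<0$, the right side is positive). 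This contradiction gives the claim.

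The main obstacle is making the comparison at $t_*$ fully rigorous when the infimum $m(t)$ is not attained (or not attained at a unique point), so that $m$ need only be locally Lipschitz rather than differentiable. I would handle this exactly as in Ref.\cite{r13,r14,r15}: use that $m(t)=\inf_x v_1(t;x)$ is locally Lipschitz, work with the upper Dini derivative $D^+m$, and note that for a.e. $t$ there is a sequence $x_k$ with $v_1(t;x_k)\to m(t)$ along which the ODE \eqref{a3} holds, so $D^+m(t)\le -m^2(t)+|K_1+\phi_1|\le -(1-\epsilon^2)m^2(t)$; a symmetric argument gives the lower bound. One also needs $m(t)$ to stay finite and negative on $[0,T_1]$, which is guaranteed by the standing assumptions and the remark following \eqref{e4}. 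With these standard facts in hand the first-zero argument above goes through verbatim, and the nesting $\Sigma_\gamma(t_2)\subset\Sigma_\gamma(t_1)$ follows for all $0\le t_1\le t_2\le T_1$.
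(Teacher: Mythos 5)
Your overall strategy (argue by contradiction at the first crossing time $t_*$ where $v_1(t_*;x)=(1-\gamma)m(t_*)$, and compare the Riccati equation for $v_1$ with a differential inequality for $m$) is viable, but the decisive step as written is wrong, and as a result no contradiction is actually obtained. From the sign condition $\frac{d}{dt}\bigl(v_1-(1-\gamma)m\bigr)\big|_{t_*}\le 0$, i.e. $v_1'(t_*;x)\le(1-\gamma)m'(t_*)$, a contradiction must come from pairing the \emph{lower} bound on $v_1'(t_*;x)$ with the \emph{upper} bound on $m'(t_*)$: since $v_1(t_*;x)=(1-\gamma)m(t_*)$ and $|K_1+\phi_1|\le\epsilon^2m^2$, one has $v_1'(t_*;x)\ge-\bigl((1-\gamma)^2+\epsilon^2\bigr)m^2(t_*)$, while $(1-\gamma)m'(t_*)\le-(1-\gamma)(1-\epsilon^2)m^2(t_*)$. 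The forced inequality is therefore
\begin{align}
\notag -\bigl((1-\gamma)^2+\epsilon^2\bigr)m^2(t_*)\le-(1-\gamma)(1-\epsilon^2)m^2(t_*),
\quad\text{i.e.}\quad \gamma(1-\gamma)\le\epsilon^2(2-\gamma),
\end{align}
and \emph{this} is false when $0<\epsilon\le\gamma<1/2$ and $\epsilon<1/4$, which gives the contradiction. Instead, you used the \emph{upper} bound $-(1-\gamma)^2m^2+\epsilon^2m^2$ for $v_1'$ (an inequality of the form ``upper bound of the left side $\ge$ upper bound of the right side'' is not forced by $v_1'\le(1-\gamma)m'$ in any case), and your final numerical statement $-\gamma(1-\gamma)\le\epsilon^2\gamma$ is in fact \emph{true} (a negative number is always $\le$ a positive one), so the claimed contradiction evaporates. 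The fix is exactly the sign correction above.

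Two further remarks. First, your sign condition at the first zero is a statement about the derivative from the left, whereas the standard near-minimizer estimate you invoke gives $D^+m(t)\le-(1-\epsilon^2)m^2(t)$, a right Dini derivative bound; to make the pointwise comparison at $t_*$ rigorous you need the analogous bound for the upper left Dini derivative of $m$ at $t_*$ (obtainable by taking near-minimizers at times $t<t_*$ and using the uniform Lipschitz bound on $v_1$ in $t$), or else replace the pointwise comparison by an integrated one. Second, note that the paper's own proof sidesteps differentiating $m$ altogether: it compares two characteristics, one through a point $x_1$ with $v_1(t_1;x_1)>(1-\gamma)m(t_1)$ and one through a point $x_2$ attaining the infimum at $t_1$, integrates the two Riccati inequalities explicitly over a short interval $[t_1,t_2]$, and compares the resulting algebraic expressions; your route is shorter once the Dini-derivative technicalities are in place, but it carries exactly the regularity burden on $m$ that the paper's argument avoids.
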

\begin{proof}
The proof of Lemma \ref{a7} can be found in Ref.\cite{r14}, here we include the detail for completeness.

Suppose on the contrary that $x_1\notin\sum_\gamma(t_1)$ but $x_1\in\sum_\gamma(t_2)$  for some $x_1\in\mathbb{R}$ for some $0\leq t_1\leq t_2\leq T_1$, that is 
\begin{align}\label{a5}
v_1(t_1;x_1)>(1-\gamma)m(t_1) ~~{\rm and}~~v_1(t_2;x_1)\leq(1-\gamma)m(t_2)<\frac{1}{2}m(t_2).
\end{align}
We may choose $t_1$ and $t_2$ close so that
\begin{align}
\notag v_1(t;x_1)\leq\frac{1}{2}m(t) ~~{\rm for~any}~~t\in[t_1,t_2].
\end{align}
Indeed, $v_1(\cdot;x_1)$ and $m$ are uniformly continuous throughout the interval $[0,T_1]$.  Let
\begin{align}\label{a4}
v_1(t_1;x_2)=m(t_1)<\frac{1}{2}m(t_1).
\end{align}
We may necessarily choose $t_2$ closer to $t_1$ so that
\begin{align}
\notag v_1(t;x_2)<\frac{1}{2}m(t)~~{\rm for~any}~~t\in[t_1,t_2].
\end{align}
For $\epsilon>0$  sufficiently small, it follows from \eqref{a2} that
\begin{align}
\notag |K_1(t;x_j)+\phi_1(t;x_j)|&\leq\epsilon^2m^2(t)\leq4\epsilon^2v_1^2(t;x_j)\\
\notag&<\frac{\gamma}{2}v_1^2(t;x_j)~~{\rm for~any}~~t\in[t_1,t_2]~~{\rm and}~~j=1,2.
\end{align}
To proceed, from \eqref{a3}, we have 
\begin{align}
\notag \frac{dv_1}{dt}(\cdot; x_1)=-v_1^2(\cdot; x_1)-K_1(\cdot; x_1)-\phi_1(\cdot;x_1)\geq(-1-\frac{\gamma}{2})v_1^2(\cdot; x_1)
\end{align}
and
\begin{align}
\notag \frac{dv_1}{dt}(\cdot; x_2)\geq\left(-1+\frac{\gamma}{2}\right)v_1^2(\cdot; x_2)
\end{align}
throughout the interval $(t_1,t_2)$. Integrating them over the interval $[t_1,t_2]$, we have
\begin{align}
\notag v_1(t_2;x_1)\geq\frac{v_1(t_1;x_1)}{1+(1+\frac{\gamma}{2})v_1(t_1;x_1)(t_2-t_1)}
\end{align}
and
\begin{align}
\notag v_1(t_2;x_2)\leq\frac{v_1(t_1;x_2)}{1+(1-\frac{\gamma}{2})v_1(t_1;x_2)(t_2-t_1)}.
\end{align}
The latter inequality and \eqref{a4} imply that
\begin{align}
\notag m(t_2)\leq\frac{m(t_1)}{1+(1-\frac{\gamma}{2})m(t_1)(t_2-t_1)}.
\end{align}
The former inequality and \eqref{a5}imply that
\begin{align}
\notag v_1(t_2;x_1)>&\frac{(1-\gamma)m(t_1)}{1+(1+\frac{\gamma}{2})(1-\gamma)m(t_1)(t_2-t_1)}\\
\notag >&\frac{(1-\gamma)m(t_1)}{1+(1-\frac{\gamma}{2})m(t_1)(t_2-t_1)}\\
\notag \geq &(1-\gamma)m(t_2).
\end{align}
There is a contradiction,  therefore we completes the proof.
\end{proof}

\begin{lemma}\label{c4}
 $0<q(t)\leq1$ and it is decreasing for any $t\in[0,T_1]$.
\end{lemma}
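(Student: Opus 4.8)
The plan is to reduce the whole statement to the monotonicity of $m$. From \eqref{a6} we have $q(t)=m(0)/m(t)$, and since $m(0)<0$ while $m(t)<0$ on $[0,T)$ the quotient is positive, so $q(t)>0$ is immediate. Moreover $s\mapsto m(0)/s$ is increasing on $(-\infty,0)$, so once we know that $t\mapsto m(t)$ is (strictly) decreasing on $[0,T_1]$, it follows that $q=m(0)/m$ is decreasing there; combined with $q(0)=1$ this yields $q(t)\le1$. Thus everything comes down to showing that $m$ is decreasing on $[0,T_1]$.

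To this end I would first observe that for each fixed $t\in[0,T_1]$ the infimum defining $m(t)$ is actually attained. Indeed $v_1(t;x)=(\partial_x u)(X(t;x),t)$, where $x\mapsto X(t;x)$ is a $C^1$ increasing bijection of $\mathbb{R}$: it is order preserving because the solution is smooth (uniqueness for $\dot X=u(X,t)$ forbids characteristics from crossing), and $|X(t;x)-x|$ stays bounded because $u$ is bounded, which forces $X(t;\pm\infty)=\pm\infty$. Since $u(\cdot,t)\in H^\infty(\mathbb{R})$ we have $\partial_x u(\cdot,t)\in C_0(\mathbb{R})$, hence $v_1(t;\cdot)\in C_0(\mathbb{R})$, and because $m(t)<0$ the infimum is a negative minimum attained at some $x_t\in\mathbb{R}$.

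Next I would run the standard envelope argument. For $h>0$ with $t+h\le T_1$ one has $m(t+h)\le v_1(t+h;x_t)$ whereas $m(t)=v_1(t;x_t)$, so dividing by $h$ and letting $h\to0^+$ gives $D^+m(t)\le\frac{dv_1}{dt}(t;x_t)$ for the upper right Dini derivative. Plugging $x=x_t$ into \eqref{a3}, using $v_1(t;x_t)=m(t)$ together with the bound \eqref{a2} (valid on $[0,T_1]$ under the standing contradiction hypothesis), we get
\[
\frac{dv_1}{dt}(t;x_t)=-m^2(t)-\big(K_1(t;x_t)+\phi_1(t;x_t)\big)\le-(1-\epsilon^2)m^2(t)<0 .
\]
Hence $D^+m(t)<0$ for every $t\in[0,T_1)$, and since $m$ is continuous on $[0,T_1]$ (as recorded in the proof of Lemma \ref{a7}), the elementary Dini-derivative monotonicity criterion shows that $m$ is strictly decreasing on $[0,T_1]$. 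In particular $m(t)\le m(0)<0$, which gives $q(t)\le1$ directly, and $q=m(0)/m$ is decreasing as explained above.

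The only step that needs genuine care is this envelope argument: one must make sure the infimum is attained so that a single characteristic can be followed — this is exactly where the decay of $u(\cdot,t)$ at spatial infinity and the monotonicity of the flow $X(t;\cdot)$ are used — and one must pass from the pointwise ODE \eqref{a3} to a differential inequality for the merely Lipschitz (not $C^1$) function $m$ through one-sided Dini derivatives rather than ordinary derivatives. Once these two points are handled, the remainder is just sign bookkeeping using $\epsilon<1$ and $m<0$.
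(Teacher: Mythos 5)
Your proof is correct, but it takes a genuinely different route from the paper. The paper proves Lemma \ref{c4} by fixing a characteristic issued from a point of $\Sigma_\gamma(T_1)$ (so that, by Lemma \ref{a7}, $m(t)\le v_1(t)\le(1-\gamma)m(t)$ for all $t\in[0,T_1]$), writing the explicit Riccati-type solution formula \eqref{a9} for $v_1$ along that characteristic, and introducing the auxiliary function $r(t)$; the bound \eqref{a2} then gives $(1+\epsilon)m(0)\le \frac{dr}{dt}\le(1-\epsilon)m(0)$, from which $r$, $v_1$, $m$ and hence $q$ are decreasing, and the proof simultaneously records \eqref{e8} and the two-sided comparison \eqref{f1} between $q$ and $r$. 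You instead run an envelope argument directly on $m$: attainment of the infimum (via $u_x(\cdot,t)\in C_0(\mathbb{R})$ and surjectivity of the flow $X(t;\cdot)$), the one-sided Dini derivative inequality $D^+m(t)\le \frac{dv_1}{dt}(t;x_t)\le-(1-\epsilon^2)m^2(t)<0$ from \eqref{a3} and \eqref{a2}, and the Dini monotonicity criterion, after which $q=m(0)/m$ being positive, $\le 1$ and decreasing is elementary. Your version is more self-contained and even somewhat cleaner on the step the paper leaves implicit (passing from monotonicity of $v_1$ along one characteristic to monotonicity of the infimum $m$), and it does not use Lemma \ref{a7} at all. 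What it does not deliver is the by-product that the paper's proof is really after: the function $r(t)$ together with \eqref{e8} and \eqref{f1}, which are invoked repeatedly later (in \eqref{c3}, \eqref{d1}, \eqref{e9} and the final estimate of the breaking time $T$); so within the architecture of the paper the comparison-function construction would still have to be carried out somewhere, whereas as a proof of the stated lemma alone your argument is complete.
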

\begin{proof}
The proof is very similar to that of \cite{r14},  here we only include the details for future usefulness.\\

Let $x\in\sum_\gamma(T_1)$, where $0<\epsilon\leq\gamma<1/2$ for $\epsilon>0$ sufficiently small. We suppress it for simplicity of notation. Note from
\eqref{a6} and Lemma \ref{a7} that
\begin{align}\label{a8}
m(t)\leq v_1(t)\leq (1-\gamma)m(t)~~{\rm for~any}~~t\in[0,T_1].
\end{align}
Let's write the solution of \eqref{a3} as
\begin{align}\label{a9}
v_1(t)=\frac{v_1(0)}{1+v_1(0)\int_{0}^{t}(1+(v_1^{-2}(K_1+\phi_1))(\tau))d\tau}=:m(0)r^{-1}(t).
\end{align}
Clearly, $r(t)>0$ for any $t\in[0,T_1]$. Note from \eqref{a8} and \eqref{a2} that
\begin{align}
\notag |(v_1^{-2}(K_1+\phi_1))(t)|<(1-\gamma)^{-2}\epsilon^2<\epsilon ~~{\rm for~any}~~t\in[0,T_1]
\end{align}
for $\epsilon>0$  sufficiently small. Therefore, it follows from \eqref{a9} that
\begin{align}\label{e8}
(1+\epsilon)m(0)\leq\frac{dr}{dt}\leq(1-\epsilon)m(0)~~{\rm throughout~the~interval}~~(0,T_1).
\end{align}
Consequently, $r(t)$ and, hence, $v_1(t)$ (see \eqref{a9}) are decreasing for any $t\in[0,T_1]$. Furthermore, $m(t)$ and, hence, $q(t)$ (see \eqref{a6})
are decreasing for any $t\in[0,T_1]$. This completes the proof. It follows from \eqref{a6}, \eqref{a9} and \eqref{a8} that
\begin{align}\label{f1}
q(t)\leq r(t)\leq \frac{1}{1-\gamma}q(t)~~{\rm for~any}~~t\in[0,T_1].
\end{align}
\end{proof}
\begin{lemma}
For $s>0,s\neq1$, and for $t\in[0,T_1]$,
\begin{align}\label{c3}
\int_{0}^{t}q^{-s}(\tau)d\tau\leq-\frac{1}{s-1}\frac{1}{(1-\epsilon)^{1+s}}\frac{1}{m(0)}\left(q^{1-s}(t)-\frac{1}{(1-\epsilon)^{1-s}}\right).
\end{align}
\end{lemma}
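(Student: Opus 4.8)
The plan is to compare $q$ with the auxiliary function $r$ built in the proof of Lemma~\ref{c4}, whose derivative is pinned between $(1+\epsilon)m(0)$ and $(1-\epsilon)m(0)$ by \eqref{e8} and which sandwiches $q$ by \eqref{f1}, and then to evaluate the resulting integral by the fundamental theorem of calculus. First I would fix $\gamma=\epsilon$ (admissible since $\epsilon>0$ is small, so $0<\epsilon<1/2$) and pick some $x\in\Sigma_\epsilon(T_1)$; this set is nonempty because $(1-\epsilon)m(T_1)>m(T_1)=\inf_{x}v_1(T_1;x)$, so there is an $x$ with $v_1(T_1;x)\le(1-\epsilon)m(T_1)$. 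Let $r=r(\cdot)$ be the function attached to this $x$ in \eqref{a9}, so that $r(0)=1$, $r(\tau)>0$ on $[0,T_1]$, by \eqref{f1} one has $(1-\epsilon)r(\tau)\le q(\tau)\le r(\tau)$ for all $\tau\in[0,T_1]$, and by \eqref{e8} one has $r'(\tau)\le(1-\epsilon)m(0)<0$ throughout $(0,T_1)$.

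Next I would reduce the estimate to one for $r$. Since $s>0$ and $q(\tau)\ge(1-\epsilon)r(\tau)>0$, raising to the power $-s$ gives $q^{-s}(\tau)\le(1-\epsilon)^{-s}r^{-s}(\tau)$, hence $\int_0^t q^{-s}(\tau)\,d\tau\le(1-\epsilon)^{-s}\int_0^t r^{-s}(\tau)\,d\tau$. To handle $\int_0^t r^{-s}(\tau)\,d\tau$ I would write the integrand as $r^{-s}(\tau)\bigl(-r'(\tau)\bigr)\big/\bigl(-r'(\tau)\bigr)$; because $-r'(\tau)\ge(1-\epsilon)|m(0)|>0$ we may bound $1/(-r'(\tau))\le 1/\bigl((1-\epsilon)|m(0)|\bigr)$, and since $r^{-s}(\tau)(-r'(\tau))\ge0$ this is licit. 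Observing that $r^{-s}(\tau)(-r'(\tau))=-\dfrac{d}{d\tau}\dfrac{r^{1-s}(\tau)}{1-s}$ and using $r(0)=1$, the fundamental theorem of calculus yields
\[
\int_0^t r^{-s}(\tau)\,d\tau\ \le\ \frac{1}{(1-\epsilon)|m(0)|}\cdot\frac{r^{1-s}(t)-1}{s-1},
\]
a nonnegative quantity since $0<r(t)\le1$ whether $s>1$ or $0<s<1$. Combining with the previous display gives $\int_0^t q^{-s}(\tau)\,d\tau\le\frac{1}{(1-\epsilon)^{1+s}|m(0)|}\cdot\frac{r^{1-s}(t)-1}{s-1}$.

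Finally I would pass from $r(t)$ back to $q(t)$ using $q(t)\le r(t)$. If $s>1$, the map $z\mapsto z^{1-s}$ is decreasing, so $r^{1-s}(t)\le q^{1-s}(t)$, and since $(1-\epsilon)^{s-1}<1$ we get $r^{1-s}(t)-1\le q^{1-s}(t)-(1-\epsilon)^{s-1}$; dividing by $s-1>0$ keeps the inequality. If $0<s<1$, the map $z\mapsto z^{1-s}$ is increasing, so $r^{1-s}(t)\ge q^{1-s}(t)$, and dividing by $s-1<0$ gives $\frac{r^{1-s}(t)-1}{s-1}\le\frac{q^{1-s}(t)-1}{s-1}$; since now $(1-\epsilon)^{s-1}>1$ one checks $\frac{q^{1-s}(t)-1}{s-1}\le\frac{q^{1-s}(t)-(1-\epsilon)^{s-1}}{s-1}$. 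In either case $\frac{r^{1-s}(t)-1}{s-1}\le\frac{q^{1-s}(t)-(1-\epsilon)^{s-1}}{s-1}$, and after rewriting $(1-\epsilon)^{s-1}=1/(1-\epsilon)^{1-s}$ and $1/|m(0)|=-1/m(0)$ this is exactly \eqref{c3}. The main obstacle is purely the bookkeeping of signs — $m(0)<0$, $r'<0$, $0<q\le1$, and $s-1$ of either sign — most delicately in this last step, where the comparison must be divided by $s-1$ and hence flips when $0<s<1$, and in the middle step, where multiplying and dividing by $-r'$ is legitimate only because the integrand $r^{-s}(-r')$ is nonnegative.
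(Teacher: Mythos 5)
Your overall strategy --- fix $x\in\Sigma_\epsilon(T_1)$, work with the function $r$ from \eqref{a9}, use \eqref{e8} to bound $dr/dt$, use \eqref{f1} to pass between $q$ and $r$, and evaluate $\int_0^t r^{-s}(-r')\,d\tau$ by the fundamental theorem of calculus --- is exactly the device the paper has in mind: it omits the proof of \eqref{c3} and points to the proof of \eqref{d1}, which runs on the same comparison with $r$. Your sign bookkeeping in the two cases $s>1$ and $0<s<1$ at the end is also correct.

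There is, however, one genuine slip: the claim $r(0)=1$. By \eqref{a9}, $r(0)=m(0)/v_1(0;x)$, and your $x$ is only known to lie in $\Sigma_\epsilon(T_1)\subset\Sigma_\epsilon(0)$, so $m(0)\le v_1(0;x)\le(1-\epsilon)m(0)$ and hence $1\le r(0)\le 1/(1-\epsilon)$; equality $r(0)=1$ would force $x$ to minimize $v_1(0;\cdot)$, which you cannot arrange while also keeping $x\in\Sigma_\epsilon(T_1)$ (and \eqref{e8}, \eqref{f1} are only available for such $x$). This affects your displayed intermediate bound: the fundamental theorem gives $\int_0^t r^{-s}(-r')\,d\tau=\bigl(r^{1-s}(t)-r^{1-s}(0)\bigr)/(s-1)$, and for $s>1$ one has $r^{1-s}(0)\le 1$, so the inequality $\int_0^t r^{-s}\,d\tau\le \frac{1}{(1-\epsilon)|m(0)|}\,\frac{r^{1-s}(t)-1}{s-1}$ can actually fail (take, say, $r'\equiv(1-\epsilon)m(0)$ and $r(0)=1/(1-\epsilon)$). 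The repair is immediate and uses only what you already cite: keep $r^{1-s}(0)$, and in the final comparison use $r(0)\le 1/(1-\epsilon)$, i.e. \eqref{f1} at $t=0$ where $q(0)=1$, to get $r^{1-s}(0)\ge(1-\epsilon)^{s-1}$ when $s>1$ and $r^{1-s}(0)\le(1-\epsilon)^{s-1}$ when $0<s<1$; together with $q(t)\le r(t)$ this yields $\frac{r^{1-s}(t)-r^{1-s}(0)}{s-1}\le\frac{q^{1-s}(t)-(1-\epsilon)^{s-1}}{s-1}$ in both cases, which is exactly \eqref{c3}. In other words, the factor $1/(1-\epsilon)^{1-s}$ in the lemma is there precisely to absorb $r(0)\le 1/(1-\epsilon)$, not a unit initial value of $r$.
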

The proof can be  found in Ref.\cite{r2}, for instance. Hence, we omit it details. See instead the proof of \eqref{d1} below.

\begin{lemma}
For $n\geq3$,
\begin{align}\label{d6}
\sum_{j=2}^{n-1}\binom{n}{j}(j-1)^{2(j-1)}(n-j)^{2(n-j)}\leq\frac{3}{2}\ep n(n-1)^{2(n-1)}.
\end{align}
\end{lemma}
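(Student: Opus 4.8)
The plan is to prove the combinatorial estimate \eqref{d6} by a direct comparison of the sum against its largest terms. First I would observe that the function $j \mapsto (j-1)^{2(j-1)}(n-j)^{2(n-j)}$ on $\{2,\dots,n-1\}$ is, up to the convention $0^0=1$, symmetric under $j \leftrightarrow n+1-j$ and is ``U-shaped'': it attains its maxima at the two endpoints $j=2$ and $j=n-1$, where the value is $1\cdot(n-2)^{2(n-2)}$, and is much smaller in the middle. So the terms that matter are the two extreme ones, together with their neighbours $j=3$ and $j=n-2$.

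The key steps, in order, would be: (i) bound $(n-2)^{2(n-2)} \le (n-1)^{2(n-1)} n^{-2}$ or a similar clean inequality, so that the endpoint binomial terms $\binom{n}{2}(n-2)^{2(n-2)} = \frac{n(n-1)}{2}(n-2)^{2(n-2)}$ are each at most a constant times $\ep^{-1}$-free multiple of $(n-1)^{2(n-1)}$; more precisely one wants $\binom{n}{2}(n-2)^{2(n-2)} \le C\, n (n-1)^{2(n-1)}$ with $C$ small; (ii) for the interior indices $3 \le j \le n-2$, show that $\binom{n}{j}(j-1)^{2(j-1)}(n-j)^{2(n-j)}$ decays geometrically as $j$ moves away from the endpoints — e.g. compare consecutive ratios and show the ratio is bounded by a fixed factor like $1/2$ once $j \ge 3$ and $j \le n-2$ — so that the whole interior sum is dominated by a geometric series whose total is again $O(n (n-1)^{2(n-1)})$ with a small constant; (iii) add the (at most four) boundary contributions $j \in \{2,3,n-2,n-1\}$ plus the geometric tail, collect constants, and check numerically that the total is $\le \frac{3}{2}\ep\, n(n-1)^{2(n-1)}$, using $\ep = e \approx 2.718$ so that $\frac{3}{2}\ep \approx 4.08$ leaves ample room. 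It may be cleaner to factor $n(n-1)^{2(n-1)}$ out at the start and prove $\sum_{j=2}^{n-1} \frac{\binom{n}{j}}{n}\,\frac{(j-1)^{2(j-1)}(n-j)^{2(n-j)}}{(n-1)^{2(n-1)}} \le \frac{3}{2}\ep$, then bound each normalized summand.

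The main obstacle I expect is making step (ii) uniform in $n$: one needs a genuinely $n$-independent geometric decay rate for the interior terms, and the estimate $(j-1)^{2(j-1)}(n-j)^{2(n-j)} \le$ (endpoint value)$\times$(small)$^{\min(j-2,\,n-1-j)}$ requires care because the two power factors trade off against each other. A convenient tool is the elementary bound $a^a b^b \le (a+b)^{a+b}$ for $a,b\ge 0$ (with $0^0=1$), applied with $a=j-1$, $b=n-j$ so $a+b=n-1$, which immediately gives $(j-1)^{2(j-1)}(n-j)^{2(n-j)} \le (n-1)^{2(n-1)}$ for \emph{every} $j$; the remaining work is then purely to beat the factor $\sum_j \binom{n}{j} \approx 2^n$ down, which one does by retaining, rather than discarding, the genuine smallness of $(j-1)^{2(j-1)}$ for $j$ near $2$ (a factor like $2^{2(j-1)}$ at worst) to kill the binomial growth. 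If a slick uniform argument proves fussy, the safe fallback is to split the sum at $j = 4$ and $j = n-3$: handle the $O(1)$ extreme terms by hand and bound the central block crudely by $2^n \cdot (\text{its largest term})$, which is small because in the center $(j-1)^{2(j-1)}(n-j)^{2(n-j)}$ is smaller than $(n-1)^{2(n-1)}$ by an exponentially large factor (roughly $4^{n}$ in the worst central case), comfortably overwhelming $2^n$. Finally I would double-check the base case $n=3$ (where the sum is the single term $\binom{3}{2}\cdot 1 \cdot 1 = 3$ and the right side is $\frac{3}{2}\ep \cdot 3 \cdot 4 = 18\ep$) and $n=4$ to confirm the constant is not violated at small $n$.
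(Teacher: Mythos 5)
Your plan is a genuinely different route from the paper's, but the two concrete mechanisms it rests on are both false as stated, and they are exactly what would carry the argument. Write $T_j=\binom{n}{j}(j-1)^{2(j-1)}(n-j)^{2(n-j)}$. First, the uniform geometric decay fails: the consecutive ratio is $T_{j+1}/T_j=\frac{n-j}{j+1}\,j^2\bigl(\tfrac{j}{j-1}\bigr)^{2(j-1)}\,\frac{1}{(n-j)^2}\bigl(\tfrac{n-j-1}{n-j}\bigr)^{2(n-j-1)}\approx\frac{j^2}{(j+1)(n-j)}$, which is indeed tiny for $j=O(1)$ but tends to $1$ near the center (for $n=2m$ and $j=m$ it equals exactly $\tfrac{m}{m+1}$), so no fixed ratio such as $1/2$ holds on all of $3\le j\le n-2$; a fixed ratio is available only for $j\le cn$ with some $c<1/2$. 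Second, the ``safe fallback'' is not safe: if the central block starts at the fixed index $j=4$, its largest summand sits at the block boundary, where $(j-1)^{2(j-1)}(n-j)^{2(n-j)}=3^6(n-4)^{2(n-4)}\approx 3^6\ep^{-6}(n-1)^{-6}(n-1)^{2(n-1)}$ is only \emph{polynomially} smaller than $(n-1)^{2(n-1)}$; multiplying by $2^n$ then overwhelms the target $\tfrac32\ep\,n(n-1)^{2(n-1)}$ for large $n$. The exponential deficit ($\sim 4^n$) you invoke is present only when $j$ is proportional to $n$, so the split points must grow linearly in $n$ and the middle must be bounded by (number of terms) times (max term), not $2^n$ times (max term). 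This is precisely the uniformity obstacle you flagged, and the proposal does not resolve it.

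For comparison, the paper sidesteps the endpoint/middle case analysis entirely: it bounds $\binom{n}{j}\le n^n/(j^j(n-j)^{n-j})$ (Stirling), so the binomial cancels the bulk of the summand; after factoring out $n\bigl(\tfrac{n}{n-1}\bigr)^{n-1}(n-1)^{2(n-1)}\le\ep\,n(n-1)^{2(n-1)}$, the $j$-th normalized term is $\tfrac1j\bigl(\tfrac{j-1}{j}\bigr)^{j-1}\tfrac{(j-1)^{j-1}(n-j)^{n-j}}{(n-1)^{n-1}}\le\tfrac{j-1}{j(n-1)}\le\tfrac1{n-1}$, using the elementary inequality $a^{a-1}b^b\le(a+b)^{a+b-1}$ with $a=j-1$, $b=n-j$, and summing the $n-2$ terms gives a constant below $3/2$. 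Your observation $a^ab^b\le(a+b)^{a+b}$ is the right kind of tool, but applied after discarding the binomial it leaves you fighting $2^n$ term by term; if you want to salvage your route without the paper's cancellation, you would need to prove the ratio bound only on $j\le cn$ (and symmetrically on the right) and control the range $cn\le j\le(1-c)n$ by an entropy-type estimate showing $T_j\le\ep^{-\delta n}(n-1)^{2(n-1)}$ there, which is substantially more work than the proposal acknowledges.
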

\begin{proof}
We use Stirling's inequality to compute that
\begin{align}
\notag &\sum_{j=2}^{n-1}\binom{n}{j}(j-1)^{2(j-1)}(n-j)^{2(n-j)}\\
\notag \leq &\sum_{j=2}^{n-1}\frac{n^n}{j^j(n-j)^{n-j}}(j-1)^{2(j-1)}(n-j)^{2(n-j)}\\
\notag =&n\left(\frac{n}{n-1}\right)^{n-1}(n-1)^{2(n-1)}\sum_{j=2}^{n-1}\frac{1}{j}\left(\frac{j-1}{j}\right)^{j-1}\frac{(j-1)^{j-1}(n-j)^{n-j}}{(n-1)^{n-1}}\\
\notag \leq &\ep n(n-1)^{2(n-1)}\sum_{j=2}^{n-1}\frac{1}{j}\frac{j-1}{n-1}\\
\notag \leq &\ep n(n-1)^{2(n-1)}\frac{1}{n-1}\int_{1}^{n}\frac{1}{y}dy\\
\notag \leq &\frac{3}{2}\ep n(n-1)^{2(n-1)}.
\end{align}
\end{proof}
We claim that
\begin{align}\label{b4}
\|v_0(t;\cdot)\|_{L^\infty(\mathbb{R})}=\|u(\cdot,t)\|_{L^\infty(\mathbb{R})}<C_0,
\end{align}
\begin{align}\label{b5}
\|v_1(t;\cdot)\|_{L^\infty(\mathbb{R})}=\|(\partial_xu)(\cdot,t)\|_{L^\infty(\mathbb{R})}<C_1q^{-1}(t),
\end{align}
\begin{align}\label{b6}
\|v_n(t;\cdot)\|_{L^\infty(\mathbb{R})}=\|(\partial_x^nu)(\cdot,t)\|_{L^\infty(\mathbb{R})}<C_2((n-1)g)^{2(n-1)}q^{-1-(n-1)\sigma}(t),
\end{align}
for $n=2,3,...$, for any $t\in[0,T_1]$, where
\begin{align}\label{b2}
C_0=2(\|u_0\|_{L^\infty(\mathbb{R})}+\|u_0^\prime\|)_{L^\infty(\mathbb{R})}),~~C_1=2\|u_0^\prime\|)_{L^\infty(\mathbb{R})},~~
C_2=(-m(0))^{3/4}
\end{align}
and
\begin{align}\label{c8}
\sigma=\frac{3}{2}+6\epsilon~~{\rm so~that}~~\sigma<2-20\epsilon
\end{align}
for $\epsilon$ in Theorem \ref{b1}. Note from \eqref{a1} that
\begin{align}
\notag \frac{1}{2}C_1=\|u_0^\prime\|_{L^\infty(\mathbb{R})}>C_2>1,
\end{align}
and we tacitly exercise it throughout the proof. It follows from \eqref{b2}, \eqref{a6} and \eqref{b3}, \eqref{a1} that
\begin{align}
\notag \|v_0(0;\cdot)\|_{L^\infty(\mathbb{R})}=\|u_0\|_{L^\infty(\mathbb{R})}<C_0,
\end{align}
\begin{align}
\notag \|v_1(0;\cdot)\|_{L^\infty(\mathbb{R})}=\|u_0^\prime\|_{L^\infty(\mathbb{R})}<C_1q^{-1}(0),
\end{align}
\begin{align}
\notag \|v_n(0;\cdot)\|_{L^\infty(\mathbb{R})}=\|u_0^{(n)}\|_{L^\infty(\mathbb{R})}<C_2((n-1)g)^{2(n-1)}q^{-1-(n-1)\sigma}(0)
\end{align}
for $n=2,3,...$. In other words, \eqref{b4}-\eqref{b6} hold for any $n=0,1,2,...$ at $t=0$. Suppose on the contrary that \eqref{b4}-\eqref{b6} hold for any $n=0,1,2,...$ throughout the interval $[0,T_2)$  but do not for some $n\geq0$ at $t=T_2$ for some $T_2\in(0,T_1]$.
By continuity, we find that
\begin{align}\label{b8}
\|v_0(t;\cdot)\|_{L^\infty(\mathbb{R})}\leq C_0,
\end{align}
\begin{align}\label{b9}
\|v_1(t;\cdot)\|_{L^\infty(\mathbb{R})}\leq C_1q^{-1}(t),
\end{align}
\begin{align}\label{c6}
\|v_n(t;\cdot)\|_{L^\infty(\mathbb{R})}\leq C_2((n-1)g)^{2(n-1)}q^{-1-(n-1)\sigma}(t)
\end{align}
for $n=2,3,...$ for any $t\in[0,T_2]$. We seek a contradiction.

For $n=0$, the proof is similar to that in \cite{r14}, here we include the details for future usefulness.

It follows from \eqref{b7}, where $\delta(t)=q(t)$, and \eqref{b8}-\eqref{b9} that
\begin{align}\label{c1}
|K_0(t;x)+\phi_0(t;x)|\leq 28(C_0q^{-\frac{1}{2}}(t)+C_1q^{-1}(t)q^{\frac{1}{2}}(t))=28(C_0+C_1)q^{-\frac{1}{2}}(t)
\end{align}
for any $t\in[0,T_2]$ for any $x\in\mathbb{R}$. Integrating \eqref{c2} over the interval $[0,T_2]$, we then show that
\begin{align}
|v_0(T_2;x)|\leq&\|u_0\|_{L^\infty(\mathbb{R})}+\int_{0}^{T_2}|K_0(t;x)+\phi_0(t;x)|dt\\
\notag \leq&\frac{1}{2}C_0+28(C_0+C_1)\int_{0}^{T_2}q^{-\frac{1}{2}}(t)dt\\
\notag \leq&\frac{1}{2}C_0-28(C_0+C_1)\frac{2}{(1-\epsilon)^{\frac{3}{2}}}\frac{1}{m(0)}\left(\frac{1}{(1-\epsilon)^{\frac{1}{2}}}-q^{\frac{1}{2}}(T_2)\right)\\
\notag \leq&\frac{1}{2}C_0-56(C_0+C_1)\frac{1}{(1-\epsilon)^{2}}\frac{1}{m(0)}\\
\notag <&C_0
\end{align}
for any $x\in\mathbb{R}$. Therefore, \eqref{b4} holds throughout the interval $[0,T_2]$. Here the second inequality uses \eqref{b2} and \eqref{c1},
the third inequality uses \eqref{c3}, the fourth inequality uses Lemma \ref{c4},  and the last inequality uses that \eqref{c5} implies that
\begin{align}
\notag -m(0)(1-\epsilon)^2>112\left(1+\frac{C_1}{C_0}\right)
\end{align}
for $\epsilon>0$ sufficiently small. Indeed, $m(0)<-1$ and, $g\geq1$ by hypotheses, and $C_1/C_0<1$ by \eqref{b2}.

For $n=1$, the proof is similar to that in Ref.\cite{r14}, here we include the details for future usefulness.

It follows from \eqref{b7}, where $\delta(t)=q^{\sigma}(t)$, and \eqref{b9}, \eqref{c6} that
\begin{align}\label{c7}
\notag |K_1(t;x)+\phi_1(t;x)|\leq&28(C_1q^{-1}(t)q^{-\frac{\sigma}{2}}(t)+C_2g^2q^{\frac{\sigma}{2}}(t)q^{-1-\sigma}(t))\\
=&28(C_1+C_2g^2)q^{-1-\frac{\sigma}{2}}(t)
\end{align}
for any $t\in[0,T_2]$ for any $x\in\mathbb{R}$. Suppose for now that $v_1(T_2;x)\geq0$. Note from \eqref{a3} that
\begin{align}
\notag \frac{dv_1}{dt}(t;x)=-v_1^2(t;x)-K_1(t;x)-\phi_1(t;x)\leq|K_1(t;x)+\phi_1(t;x)|
\end{align}
for any $(t,x)\in(0,T_2)\times\mathbb{R}$. Integrating this over the interval $[0,T_2]$, we get
\begin{align}
\notag v_1(T_2;x)\leq&\|u^\prime_0\|_{L^\infty(\mathbb{R})}+\int_{0}^{T_2}|K_1(t;x)+\phi_1(t;x)|dt\\
\notag \leq& \frac{1}{2}C_1+28(C_1+C_2g^2)\int_{0}^{T_2}q^{-2}(t)dt\\
\notag \leq& \frac{1}{2}C_1-28(C_1+C_2g^2)\frac{1}{(1-\epsilon)^3m(t)}(q^{-1}(T_2)-(1-\epsilon))\\
\notag \leq& \frac{1}{2}C_1q^{-1}(T_2)-28(C_1+C_2g^2)\frac{1}{(1-\epsilon)^3m(t)}q^{-1}(T_2)\\
\notag \leq& C_1q^{-1}(T_2).
\end{align}
The second inequality uses \eqref{b2} and \eqref{c7}, Lemma \ref{c4}, \eqref{c8}, the third inequality uses \eqref{c3}, 
the fourth inequality uses Lemma \ref{c4}, and the last inequality uses that \eqref{c5} implies that
\begin{align}
\notag -m(0)(1-\epsilon)^3>56\left(1+\frac{C_2}{C_1}g^2\right)
\end{align}
for $\epsilon>0$ sufficiently small. Indeed, $m(0)<-1$ by hypotheses and $C_2/C_1<1/2$ by \eqref{b2}.

Suppose on the other hand that $v_1(T_2;x)<0$. We may assume without loss of generality that 
$\|u^\prime_0\|_{L^\infty}(\mathbb{R})=-m(0)$; we take $-u$ otherwise. It then follows from \eqref{a6} 
and \eqref{b2} that
\begin{align}
\notag v_1(T_2;x)\geq m(T_2)=m(0)q^{-1}(T_2)>-C_1q^{-1}(T_2).
\end{align}
Therefore \eqref{b5} holds throughout the interval $[0,T_2]$.

For $n\geq3$, the proof is similar to that in Ref.\cite{r14}, here we include the details for future usefulness.

For $n\geq2$,  It follows from \eqref{b7} where $\delta(t)=(ng)^{-2}q^{\sigma}(t)$, and \eqref{c6} that
\begin{align}\label{d7}
\notag |K_n(t;x)+\phi_n(t;x)|\leq&28(ngq^{-\frac{\sigma}{2}}(t)C_2((n-1)g)^{2(n-1)}q^{-1-(n-1)\sigma}(t)\\
\notag +&(ng)^{-1}q^{\frac{\sigma}{2}}(t)C_2(ng)^{2n}q^{-1-n\sigma}(t))\\
\notag =&28ngC_2((n-1)g)^{2(n-1)}\left(1+(\frac{n}{n-1})^{2(n-1)}\right)q^{-1-\frac{\sigma}{2}-(n-1)\sigma}(t)\\
 <&28(1+\ep^2)ngC_2((n-1)g)^{2(n-1)}q^{-1-\frac{\sigma}{2}-(n-1)\sigma}(t)
\end{align}
for any $(t,x)\in [0,T_2]\times\mathbb{R}$.

For $n\geq2$, furthermore, let
\begin{align}\label{c9}
v_1(T_{3,n};x)=m(T_{3,n})~~{\rm and}~~m(t)\leq v_1(t;x)\leq \frac{1}{(1+\epsilon)^{1/(2+(n-1)\sigma)}}m(t)
\end{align}
for any $t\in[T_{3,n},T_2]$, for some $T_{3,n}\in(0,T_2)$ and for some $x\in\mathbb{R}$. Indeed, since $v_1$ and $m$ 
are uniformly continuous throughout the interval $[0,T_2]$, we may find $T_{3,n}$ close to $T_2$ so that \eqref{c9} 
holds. Of course,  $x$ depends on $n$, but we suppress it for simplicity of notation. We rerun the argument in the proof of 
Lemma \ref{c4} to arrive at that
\begin{align}\label{d3}
(1+\epsilon)m(0)\leq\frac{dr}{dt}\leq(1-\epsilon)m(0)~~{\rm throughout~the~interval}~(T_{3,n},T_2)
\end{align}
for $\epsilon>0$ sufficiently small, and that
\begin{align}\label{d2}
q(t)\leq r(t)\leq (1+\epsilon)^{1/(2+(n-1)\sigma)}q(t)~~{\rm for~any}~~t\in[T_{3,n},T_2].
\end{align}
Then it follows that
\begin{align}\label{d1}
\notag &\int_{T_{3,n}}^{T_2}q^{-2-(n-1)\sigma}(t)dt\\
\notag \leq&(1+\epsilon)\int_{T_{3,n}}^{T_2}r^{-2-(n-1)\sigma}(t)dt\\
\notag \leq&\frac{1+\epsilon}{1-\epsilon}\frac{1}{m(0)}\int_{T_{3,n}}^{T_2}r^{-2-(n-1)\sigma}(t)\frac{dr}{dt}(t)dt\\
\notag =&-\frac{1}{1+(n+1)\sigma}\frac{1+\epsilon}{1-\epsilon}\frac{1}{m(0)}(r^{-1-(n-1)\sigma}(T_2)-r^{-1-(n-1)\sigma}(T_{3,n}))\\
\leq&-\frac{1}{1+(n+1)\sigma}\frac{1+\epsilon}{1-\epsilon}\frac{1}{m(0)}(r^{-1-(n-1)\sigma}(T_2)-r^{-1-(n-1)\sigma}(T_{3,n})).
\end{align}
This offers refinements over \eqref{c3} when $T_{3,n}$ and $T_2$ are close. Observe that the right side of \eqref{d1} decreases in $n$. Here the first inequality uses \eqref{d2}, the second inequality uses \eqref{d3}, and the last inequality uses \eqref{d2} and \eqref{c9}.

For $n\geq3$, let $|v_n(T_2;x_n)|=\max\limits_{x\in\mathbb{R}}|v_n(T_2;x)|$. Assume without loss of generality that $v_n(T_2;x_n)>0$; we 
take $-u$ otherwise. Choosing $T_{3,n}$ close to $T_2$ so that
\begin{align}\label{d5}
v_n(t;x_n)\geq0~~{\rm for~~any}~~t\in[T_{3,n},T_n].
\end{align}
We necessarily choose $T_{3,n}$ closer to $T_2$ so that \eqref{c9} holds for some $x\in\mathbb{R}$. Consequently, \eqref{d1} 
holds. It follows from \eqref{d4} that
\begin{align}
\notag \frac{dv_n}{dt}(t;x_n)=&-(n+1)v_1(t;x_n)v_n(t;x_n)-\sum^n_{j=1}\binom{n}{j}v_j(t;x_n)v_{n+1-j}(t;x_n)\\
\notag &-K_n(t;x_n)-\phi_n(t;x_n)\\
\notag \leq &-(n+1)m(0)C_2((n-1)g)^{2(n-1)}q^{-1}(t)q^{-1-(n-1)\sigma}(t)\\
\notag &+\sum^n_{j=1}\binom{n}{j}C_2^2((j-1)g)^{2(j-1)}((n-j)g)^{2(n-j)}q^{-1-(j-1)\sigma}(t)q^{-1-(n-j)\sigma}(t)\\
\notag &+|K_n(t;x_n)|+|\phi_n(t;x_n)|\\
\notag \leq &-(n+1)m(0)C_2((n-1)g)^{2(n-1)}q^{-2-(n-1)\sigma}(t)\\
\notag &+\frac{9}{4}\ep^2n((n-1)g)^{2(n-1)}C_2^2q^{-2-(n-1)\sigma}(t)\\
\notag  &+28(1+\ep^2)ngC_2((n-1)g)^{2(n-1)}q^{-1-\frac{\sigma}{2}-(n-1)\sigma}(t)\\
\notag \leq & (-m(0)(n+1)+\frac{3}{2}\ep C_2n+28(1+\ep^2)ng)C_2((n-1)g)^{2(n-1)}q^{-2-(n-1)\sigma}(t)
\end{align}
for any $t\in(T_{3,n},T_2)$. The first inequality uses \eqref{a6}, \eqref{d5} and \eqref{c6}, the second inequality uses 
\eqref{d6} and \eqref{d7}, and the last inequality uses Lemma \ref{c4} and \eqref{c8}. Integrating this over the interval 
$[T_{3,n},T_2]$, we have
\begin{align}
\notag v_n(T_2;x_n)<&v_n(T_{3,n},x_n)+(-m(0)(n+1)+\frac{3}{2}\ep C_2n+28(1+\ep^2)ng)\\
\notag &\times C_2((n-1)g)^{2(n-1)}\int_{T_{3,n}}^{T_2}q^{-2-(n-1)\sigma}(t)dt\\
\notag \leq&C_2((n-1)g)^{2(n-1)}q^{-1-(n-1)\sigma}(T_{3,n})\\
\notag &-(-m(0)(n+1)+\frac{3}{2}\ep C_2n+28(1+\ep^2)ng)\\
\notag &\times\frac{1}{1+(n+1)\sigma}\frac{1+\epsilon}{1-\epsilon}\frac{1}{m(0)}C_2((n-1)g)^{2(n-1)}\\
\notag &\times(q^{-1-(n-1)\sigma}(T_2)-q^{-1-(n-1)\sigma}(T_{3,n}))\\
\notag < &C_2((n-1)g)^{2(n-1)}q^{-1-(n-1)\sigma}(T_{3,n})\\
\notag &+\frac{n+1+\epsilon n}{1+(n-1)\sigma}\frac{1+\epsilon}{1-\epsilon}C_2((n-1)g)^{2(n-1)}\\
\notag &\times(q^{-1-(n-1)\sigma}(T_2)-q^{-1-(n-1)\sigma}(T_{3,n}))\\
\notag < &(1-\frac{4+3\epsilon}{2\sigma+1}\frac{1+\epsilon}{1-\epsilon})C_2((n-1)g)^{2(n-1)}q^{-1-(n-1)\sigma}(T_{3,n})\\
\notag &+\frac{4+3\epsilon}{2\sigma+1}\frac{1+\epsilon}{1-\epsilon}C_2((n-1)g)^{2(n-1)}q^{-1-(n-1)\sigma}(T_2)\\
\notag < &C_2((n-1)g)^{2(n-1)}q^{-1-(n-1)\sigma}(T_2).
\end{align}
Therefore, \eqref{b6} holds for $n=3,4,$ ... throughout the interval $[0,T_2]$. Here the second inequality uses \eqref{c6} 
and \eqref{d1}, the third inequality uses that \eqref{c5} and \eqref{d8} imply that
\begin{align}
\notag -m(0)\epsilon >\frac{3}{2}\ep C_2+28(1+\ep^{1/g})g
\end{align}
for $\epsilon>0$ sufficiently small. Indeed, $m(0)<-1$  by hypotheses and recall \eqref{b2}. The fourth inequality uses 
\eqref{c6} and that $\frac{(1+\epsilon)n+1}{n\sigma+1-\sigma}$ deceases in $n\geq3$, and the last inequality uses \eqref{c6} 
and Lemma \ref{c4}. Indeed,
\begin{align}
0<\frac{4+3\epsilon}{2\sigma+1}\frac{1+\epsilon}{1-\epsilon}<1.
\end{align}

For $n=2$ the proof is similar to that in \cite{r13}, here we include the details for future usefulness.

When $x_2\notin\sum_{1/3}(T_2)$. Let $|v_2(T_2;x_2)|=\max\limits_{x\in\mathbb{R}}|v_2(T_2;x)|$. Assume without loss 
of generality that $v_2(T_2;x_2)>0$. We choose $T_{3,n}$ close to $T_2$ so that
\begin{align}\label{e2}
v_2(t;x_2)\geq0~~{\rm~~for~~any}~~t\in[T_{3,n},T_2].
\end{align}
We necessarily choose $T_{3,n}$ closer to $T_2$ so that \eqref{c9} and, hence, \eqref{d1} hold.

Suppose for now that $x_2\notin\sum_{1/3}(T_2)$, i.e, $v_1(T_2;x_2)>\frac{2}{3}m(T_2)$ (see \eqref{d9}). 
We may necessarily choose $T_{3,n}$ closer to $T_2$ so that
\begin{align}\label{e1}
v_1(t;x_2)\geq\frac{2}{3}m(t)~~{\rm~~for~~any}~~t\in[T_{3,n},T_2].
\end{align}
Indeed, $v_1$ and $m$ are uniformly continuous throughout the interval $[0,T_2]$. The proof is similar to that for 
$n\geq3$. Specifically, it follows from \eqref{d4} that
\begin{align}
\notag \frac{dv_2}{dt}(t;x_2)=&-3v_1(t;x_2)v_2(t;x_2)-K_2(t;x_2)-\phi_2(t;x_2)\\
\notag \leq&-2m(0)C_2g^2q^{-1}(t)q^{-1-\sigma}(t)+28(1+\ep^2)2gC_2g^2q^{-1-\frac{\sigma}{2}-\sigma}(t)\\
\notag \leq&2(-m(0)+28(1+\ep^2)g)C_2g^2q^{-2-\sigma}(t)
\end{align}
for any $t\in(T_{3,n},T_2)$. The first inequality uses \eqref{e1}, \eqref{e2}, \eqref{a8} and \eqref{d7}, and 
the second inequality uses Lemma \ref{c4} and \eqref{c8}. Integrating this over the interval $[T_{3,n},T_2]$, 
we then show that
\begin{align}
\notag v_2(T_2;x_2)<&v_2(T_{3,n};x_2)+2(-m(0)+28(1+\ep^2)g)C_2g^2\int_{T_{3,n}}^{T_2}q^{-2-\sigma}(t)dt\\
\notag \leq&C_2g^2q^{-1-\sigma}(T_{3,n})-2(-m(0)+28(1+\ep^2)g)\\
\notag &\times\frac{1}{1+\sigma}\frac{1+\epsilon}{1-\epsilon}\frac{1}{m(0)}C_2g^2(q^{-1-\sigma}(T_2)-q^{-1-\sigma}(T_{3,n}))\\
\notag \leq&C_2g^2q^{-1-\sigma}(T_{3,n})+\frac{2}{1+\sigma}\frac{(1+\epsilon)^2}{1-\epsilon}C_2g^2(q^{-1-\sigma}(T_2)-q^{-1-\sigma}(T_{3,n}))\\
\notag =&\left(1-\frac{2}{1+\sigma}\frac{(1+\epsilon)^2}{1-\epsilon}\right)C_2g^2q^{-1-\sigma}(T_{3,n})+
\frac{2}{1+\sigma}\frac{(1+\epsilon)^2}{1-\epsilon}C_2g^2q^{-1-\sigma}(T_2)\\
\notag \leq&C_2g^2q^{-1-\sigma}(T_2).
\end{align}
The second inequality uses \eqref{c6} and \eqref{d1}, and the third inequality uses that \eqref{c5} implies that
\begin{align}
\notag -m(0)\epsilon>28(1+\ep^2)g
\end{align}
for $\epsilon>0$ sufficiently small. Indeed, $m(0)<-1$ by hypotheses. The last inequality uses \eqref{c8} 
and Lemma \ref{c4}. Indeed,
\begin{align}
\notag 0<\frac{2}{1+\sigma}\frac{(1+\epsilon)^2}{1-\epsilon}<1.
\end{align}

When $x_2\in\sum_{1/3}(T_2)$. It follows from Lemma \ref{a7} that
\begin{align}\label{f9}
v_1(t;x_2)\leq\frac{2}{3}m(t)<0{\rm~~for~~any}~~t\in[0,T_2].
\end{align}
We shall explore the “smoothing effects” of the solution of \eqref{e3}.

Differentiating \eqref{e3} with respect to $x$ and recalling \eqref{e4}, we have
\begin{align}\label{e7}
\begin{cases}
\frac{d}{dt}(\partial_xX)=v_1(\partial_xX),\\
(\partial_xX)(0 ; x)=1,
\end{cases}
\end{align}
\begin{align}\label{f2}
\begin{cases}
\frac{d}{dt}(\partial^2_xX)=v_2(\partial_xX)^2+v_1(\partial^2_xX),\\
(\partial^2_xX)(0 ; x)=0,
\end{cases}
\end{align}
and
\begin{align}\label{e5}
\begin{cases}
\frac{d}{dt}(\partial^3_xX)=v_3(\partial_xX)^3+3v_2(\partial_xX)(\partial^2_xX)+v_1(\partial^3_xX),\\
(\partial^3_xX)(0;x)=0
\end{cases}
\end{align}
throughout the interval $(0,T_2)$. Integrating \eqref{c2}, moreover, we show that
\begin{align}
\notag v_0(t;x)=u_0(x)-\int_{0}^{t}(K_0(\tau;x)+\phi_0(\tau;x))d\tau
\end{align}
for any $(t, x)\in [0,T_2]\times \mathbb{R}$. Differentiating it with respect to $x$ and recalling \eqref{e4}, then we  arrive at that
\begin{align}\label{f8}
(v_2(\partial_xX)^2+v_1(\partial_x^2X))(t;x)=u^{\prime\prime}_0(x)-I_2(t;x),
\end{align}
\begin{align}\label{e6}
(v_3(\partial_xX)^3+3v_2(\partial_xX)(\partial^2_xX)+v_1(\partial^3_xX))(t;x)=u^{\prime\prime\prime}_0(x)-I_3(t;x)
\end{align}
for any $(t, x)\in [0,T_2]\times \mathbb{R}$, where
\begin{align}\label{f5}
I_2(t;x)=\int_{0}^{t}((K_2+\phi_2)(\partial_xX)^2+(K_1+\phi_1)(\partial^2_xX))(\tau;x)d\tau,
\end{align}
\begin{align}\label{g2}
I_3(t;x)=\int_{0}^{t}((K_3+\phi_3)(\partial_xX)^3+3(K_2+\phi_2)(\partial_xX)(\partial^2_xX)+(K_1+\phi_1)(\partial^3_xX)(\tau;x)d\tau.
\end{align}
Note from \eqref{e5} and \eqref{e6} that
\begin{align}\label{g3}
\begin{cases}
\frac{d}{dt}(\partial^3_xX)(\cdot;x)=u^{\prime\prime\prime}_0(x)-I_3(\cdot ; x),\\
(\partial^3_xX)(0;x)=0.
\end{cases}
\end{align}

We claim that
\begin{align}\label{e9}
\frac{1}{2}q^{1+2\epsilon}(t)\leq(\partial_xX)(t;x_2)\leq2q^{1-\epsilon}(t)~~{\rm for~~any~~}t\in[0,T_2].
\end{align}
Indeed, it follows from \eqref{e3}, \eqref{e7} and \eqref{a9}, \eqref{e8} that
\begin{align}
\notag \frac{1}{1-\epsilon}\frac{dr/dt}{r}\leq\frac{d(\partial_xX)/dt}{\partial_xX}\leq\frac{1}{1+\epsilon}\frac{dr/dt}{r}
\end{align}
throughout the interval $(0,T_2)$. Integrating this over the interval $[0,t]$ and recalling \eqref{e7}, we then show that
\begin{align}
\notag \left(\frac{r(t)}{r(0)}\right)^{1/(1-\epsilon)}\leq(\partial_xX)(t;x_2)\leq\left(\frac{r(t)}{r(0)}\right)^{1/(1+\epsilon)}
\end{align}
for any $t\in[0,T_2]$. Therefore \eqref{e9} follows from \eqref{f1}.

To proceed, we shall show that
\begin{align}\label{f3}
|(\partial^2_xX)(t;x_2)|<-\frac{8}{m(0)}C_2g^2q^{2-\sigma-2\epsilon}(t)
\end{align}
and
\begin{align}\label{f4}
|(\partial^3_xX)(t;x_2)|<\frac{16\epsilon}{m^2(0)}C^2_2g^4q^{3-2\sigma+7\epsilon}(t)
\end{align}
for any $t\in[0,T_2]$. It follows from \eqref{f2} and \eqref{e5} that \eqref{f3} and \eqref{f4} hold at $t=0$. 
Suppose on the contrary that \eqref{f3} and \eqref{f4} hold throughout the interval but do not at $t=T_4$ for some 
$T_4\in(0,T_2]$. By continuity, we find that
\begin{align}\label{f6}
|(\partial^2_xX)(t;x_2)|<-\frac{8}{m(0)}C_2g^2q^{2-\sigma-2\epsilon}(t),
\end{align}
\begin{align}\label{f7}
|(\partial^3_xX)(t;x_2)|<\frac{16\epsilon}{m^2(0)}C^2_2g^4q^{3-2\sigma+7\epsilon}(t)
\end{align}
for any $t\in[0,T_4]$. We seek a contradiction.\\

We use \eqref{f5} to compute that
\begin{align}\label{g1}
\notag |I_2(t;x_2)|\leq&\int_{0}^{t}(112(1+\ep^2)2gC_2g^2q^{-1-\frac{\sigma}{2}-\sigma}(\tau)q^{2-2\epsilon}(\tau)\\
\notag &-\frac{8}{m(0)}C_2g^2q^{2-\sigma-2\epsilon}(\tau)28(C_1+C_2g^2)q^{-1-\frac{\sigma}{2}}(\tau))d\tau\\
\notag \leq&224\left((1+\ep^2)g+2\left(1+\frac{C_2}{C_1}g^2\right)\right)C_2g^2\int_{0}^{t}q^{-\sigma+8\epsilon}(\tau)d\tau\\
\notag \leq&-224\left((1+\ep^2)g+2\left(1+\frac{C_2}{C_1}g^2\right)\right)C_2g^2\\
\notag  &\times\frac{1}{\sigma-1-8\epsilon}\frac{1}{(1-\epsilon)^{\sigma+1-8\epsilon}}\frac{1}{m(0)}(q^{1-\sigma+8\epsilon}(t)-(1-\epsilon)^{\sigma-1+8\epsilon})\\
 <&\epsilon C_2g^2q^{1-\sigma+8\epsilon}(t)
\end{align}
for any $t\in[0,T_4]$. The first inequality uses \eqref{d7}, \eqref{e9} and \eqref{c7}, \eqref{f6} and the second inequality uses Lemma \ref{c4}
and \eqref{c8}. Assume without loss of generality that $\|u^\prime_0\|_{L^\infty(\mathbb{R})}=-m(0)$; we take $-u$ 
otherwise. The third inequality use \eqref{c3}, and the last inequality uses that \eqref{c5} and \eqref{c8} imply
\begin{align}
\notag -m(0)\epsilon(1-\epsilon)^{\sigma+1-8\epsilon}>\frac{224}{\sigma-1-8\epsilon}\left((1+\ep^2)g+2(1+\frac{C_2}{C_1}g^2)\right)
\end{align}
for $\epsilon>0$  sufficiently small. Indeed, $\sigma+1-8\epsilon=5/2-2\epsilon$ and $\sigma-1-8\epsilon=1/2-2\epsilon$ by \eqref{c8}, 
$m(0)<-1$ by hypotheses, $C_2/C_1<1/2$ by \eqref{b2}, and replace $\epsilon$ by $\epsilon/18$. Evaluating \eqref{f8} at $t=T_4$ and $x=x_2$, 
we then show that
\begin{align}
\notag |(\partial^2_xX)(T_4;x_2)|=&|v_1^{-1}(T_4;x_2)||u^{\prime\prime}_0(x_2)-I_2(T_4;x_2)-v_2(T_4;x_2)(\partial_xX)(T_4;x_2)^2|\\
\notag <&-\frac{3}{2}\frac{1}{m(0)}q(T_4)(g^2+\epsilon^2C_2g^2q^{1-\sigma+8\epsilon}(T_4)+4C_2g^2q^{-1-\sigma}(T_4)q^{2-2\epsilon}(T_4))\\
\notag \leq&-\frac{3}{2}(5+\epsilon)\frac{1}{m(0)}C_2g^2q^{2-\sigma-2\epsilon}(T_4)\\
\notag <&-\frac{8}{m(0)}C_2g^2q^{2-\sigma-2\epsilon}(T_4).
\end{align}
Therefore, \eqref{f3} holds throughout the interval $[0,T_2]$. Here the first inequality uses \eqref{f9}, \eqref{a6} and \eqref{b3}, \eqref{g1}, \eqref{c6}, \eqref{e9}, the second inequality uses \eqref{a1}, \eqref{b2} and Lemma \ref{c4}, \eqref{c8}, and the last  inequality follows for $\epsilon>0$  sufficiently small.

Similarly, we use \eqref{g2} to compute that
\begin{align}\label{g4}
\notag |I_3(t;x_2)|<&\int_{0}^{t}(224(1+\ep^2)3gC_2(2g)^4q^{-1-\frac{\sigma}{2}-2\sigma}(\tau)q^{3-3\epsilon}(\tau)\\
\notag &-48\times28(1+\ep^2)2gC_2^2g^4\frac{1}{m(0)}q^{-1-\frac{\sigma}{2}-\sigma}(\tau)q^{1-\epsilon}(\tau)q^{2-\sigma-2\epsilon}(\tau)\\
\notag &+28(C_1+C_2g^2)\frac{\epsilon}{m^2(0)}C_2^2(2g)^4q^{-1-\frac{\sigma}{2}}(\tau)q^{3-2\sigma+7\epsilon}(\tau))d\tau\\
\notag \leq&56\left(12(1+\ep^2)g(\frac{1}{C_2}-\frac{1}{4m(0)})-(1+\frac{C_2}{C_1}g^2)\frac{\epsilon}{m(0)}\right)\\
\notag &\times C_2^2(2g)^4\int_{0}^{t}q^{1-2\sigma+7\epsilon}(\tau)d\tau\\
\notag \leq&-56\left(12(1+\ep^2)g(\frac{1}{C_2}-\frac{1}{4m(0)})-(1+\frac{C_2}{C_1}g^2)\frac{\epsilon}{m(0)}\right)\\
\notag &\times\frac{1}{2\sigma-2-7\epsilon}\frac{1}{(1-\epsilon)^{2\sigma-7\epsilon}}\frac{1}{m(0)}C_2^2(2g)^4(q^{2-2\sigma+7\epsilon}(t)-(1-\epsilon)^{2\sigma-2-7\epsilon})\\
\leq&-\frac{\epsilon^2}{m(0)}C_2^2(2g)^4q^{2-2\sigma+7\epsilon}(t)
\end{align}
for any $t\in[0,T_4]$. The first inequality uses \eqref{d7}, \eqref{e9}, \eqref{f6} and \eqref{c1}, \eqref{f7}, and 
the second inequality uses that \eqref{c8} implies that $2-5/2\sigma-3\epsilon>1-2\sigma+7\epsilon$. Assume without loss of generality that $\|u^\prime_0\|_{L^\infty(\mathbb{R})}=-m(0)$. The third inequality uses \eqref{c3} 
and the last inequality uses that \eqref{c5} implies that
\begin{align}
\notag \epsilon^2(1-\epsilon)^{2\sigma-7\epsilon}(-m(0))^{3/4}>\frac{1680}{2\sigma-2-7\epsilon}(1+\ep^2)
\end{align}
and
\begin{align}
\notag -\epsilon(1-\epsilon)^{2\sigma-7\epsilon}m(0)>\frac{56}{2\sigma-2-7\epsilon}\left(1+\frac{C_2}{C_1}g^2\right)
\end{align}
for $\epsilon>0$ sufficiently small. Indeed, $2\sigma-7\epsilon=3+5\epsilon$ and $2\sigma-2-7\epsilon=1+5\epsilon$ by \eqref{c8}, 
$m(0)<-1$ by hypotheses, $C_2/C_1<1/2$ by \eqref{b2}, and replace $\epsilon$ by $\epsilon/32$. Integrating \eqref{g3} over the interval 
$[0,T_4]$, we then show that
\begin{align}
\notag |(\partial^3_xX)(T_4;x_2)|\leq& \int_{0}^{T_4}(|u^{\prime\prime\prime}_0(x_2)|+|I_3(t;x_2)|)dt\\
\notag <&\int_{0}^{T_4}\left((2g)^4-\frac{\epsilon^2}{m(0)}C_2^2(2g)^4q^{2-2\sigma+7\epsilon}(t)\right)dt\\
\notag \leq& -\left(\frac{1}{C_2^2}-\frac{\epsilon^2}{m(0)}\right)\frac{1}{2\sigma-3-7\epsilon}\frac{1}{(1-\epsilon)^{2\sigma-1-7\epsilon}}\frac{1}{m(0)}\\
\notag &\times C_2^2(2g)^4(q^{3-2\sigma+7\epsilon}(T_4)-(1-\epsilon)^{2\sigma-3-7\epsilon})\\
\notag <&\frac{\epsilon}{m^2(0)}C_2^2(2g)^4q^{3-2\sigma+7\epsilon}(T_4).
\end{align}
Therefore, \eqref{f4} holds throughout the interval $[0,T_2]$. Here the second inequality uses \eqref{b3} 
and \eqref{g4}, the third inequality uses \eqref{c3}, and the last inequality uses that \eqref{c5} implies that
\begin{align}
\notag \epsilon^2(1-\epsilon)^{2\sigma-1-7\epsilon}(-m(0))^{1/2}>\frac{2}{5}
\end{align}
for $\epsilon>0$ sufficiently small, satisfying $(1-\epsilon)^{2\sigma-1-7\epsilon}>2/5$. Indeed, $2\sigma-1-7\epsilon=2+5\epsilon$ and 
$2\sigma-3-7\epsilon=5\epsilon$ by \eqref{c8}, $m(0)<-1$  by hypotheses, and recall \eqref{b2}.

To proceed, since $v_2(T_2;x_2)=\max\limits_{x\in\mathbb{R}}|v_2(T_2;x)|$, it follows that
\begin{align}
\notag v_3(T_2;x_2)(\partial_xX)(T_2;x_2)=0.
\end{align}
Multiplying  \eqref{f8} by $3v_2(\partial_xX)$ and \eqref{e6} by $v_1$ and we take their difference to show that
\begin{align}
\notag v_2^2(T_2;x_2)=&\frac{1}{3}(\partial_xX)^{-3}(T_2;x_2)(v_1^2(T_2;x_2)(\partial^3_xX)(T_2;x_2)\\
\notag &+3v_2(T_2;x_2)(\partial_xX)(T_2;x_2)(u^{\prime\prime}_0(x_2)-I_2(T_2;x_2))\\
\notag &-v_1(T_2;x_2)(u^{\prime\prime\prime}_0(x_2)-I_3(T_2;x_2)))\\
\notag <&\frac{8}{3}q^{-3-6\epsilon}(T_2)(m^2(0)\frac{\epsilon}{m^2(0)}C^2_2(2g)^4q^{-2}(T_2)q^{3-2\sigma+7\epsilon}(T_2)\\
\notag &+6C_2g^2q^{-1-\sigma}(T_2)q^{1-\epsilon}(T_2)(g^2+\epsilon C_2g^2q^{1-\sigma+8\epsilon}(T_2))\\
\notag &-m(0)q^{-1}(T_2)((2g)^4-\frac{\epsilon^2}{m(0)}C_2^2(2g)^4q^{2-2\sigma+7\epsilon}(T_2)))\\
\notag <&\frac{8}{3}\left(\epsilon+\frac{3}{8}(\frac{1}{C_2}+\epsilon)+\left(\frac{1}{(-m(0))^{1/2}}+\epsilon^2\right)\right)C_2^2(2g)^4q^{-2-2\sigma+\epsilon}(T_2)\\
\notag <&C_2^2g^{4}q^{-2-2\sigma}(T_2).
\end{align}
Therefore, \eqref{b6} holds for $n=2$ throughout the interval $[0,T_2]$. Here the first inequality uses \eqref{e9}, 
\eqref{f9}, \eqref{a6}, \eqref{f7}, \eqref{c6} and \eqref{b3}, \eqref{g1}, \eqref{g4}, the second inequality uses \eqref{a1}, \eqref{b2} 
and Lemma \ref{c4}, \eqref{c8}, and the last inequality uses that
\begin{align}
\notag \epsilon +\frac{3}{8}(\epsilon^{\frac{3}{4}}+\epsilon)+\epsilon^{\frac{1}{2}}+\epsilon^2<\frac{3}{2^7}
\end{align}
for $\epsilon>0$ sufficiently small.

To summarize, a contradiction proves that \eqref{b4}, \eqref{b5} and \eqref{b6} hold for any $n=0,1,2,$... throughout the interval $[0,T_1]$.

To proceed, note that
\begin{align}
\notag |K_1(t;x)+\phi_1(t;x)|\leq&28(C_1+C_2g^2)q^{-1-\frac{\sigma}{2}}(t)\\
\notag <&28(C_1+C_2g^2)q^{2}(t)\\
\notag <&28(C_1+C_2g^2)m^{-2}(0)m^2(t)\\
\notag <&\epsilon^2m^2(t)
\end{align}
for any $t\in[0,T_1]$ for any $x\in\mathbb{R}$. The first inequality uses \eqref{c7}, the second inequality uses Lemma \ref{c4} 
and \eqref{c8}, and the third inequality uses \eqref{a6}. Assume without loss of generality that 
$\|u^\prime_0\|_{L^\infty(\mathbb{R})}=-m(0)$. The last inequality uses that \eqref{c5} implies that
\begin{align}
\notag -m(0)\epsilon^2>56\left(1+\frac{C_2}{C_1}g^2\right)
\end{align}
for $\epsilon>0$ sufficiently small. Indeed, $m(0)<-1$ by hypotheses, and $C_2/C_1<1/2$ by \eqref{b2}. 
A contradiction therefore proves \eqref{g5}. Furthermore, \eqref{b4}, \eqref{b5} and \eqref{b6} 
hold for any $n=0,1,2,$... throughout the interval $[0,T^\prime]$ for any $T^\prime<T$.

To conclude, let $x\in\sum_\epsilon(t)$ for $t\in[0,T)$. It follows from \eqref{a9} and \eqref{e8} that
\begin{align}
\notag m(0)(v_1^{-1}(0;x)+(1+\epsilon)t)\leq r(t;x)\leq m(0)(v_1^{-1}(0;x)+(1-\epsilon)t).
\end{align}
Moreover, it follows from Lemma \ref{a7} that $m(0)<v_1(0;x)\leq(1-\epsilon)m(0)$. Hence, one has
\begin{align}
\notag 1+m(0)(1+\epsilon)t\leq r(t;x)\leq \frac{1}{1-\epsilon}+m(0)(1-\epsilon)t.
\end{align}
Furthermore, it follows from \eqref{f1} that
\begin{align}
\notag (1-\epsilon)+m(0)(1-\epsilon^2)t\leq q(t)\leq \frac{1}{1-\epsilon}+m(0)(1-\epsilon)t.
\end{align}
Since the function on the left side decreases to zero as $t\rightarrow -\frac{1}{m(0)}\frac{1}{1+\epsilon}$ 
and since the function on the right side decreases to zero as $t\rightarrow -\frac{1}{m(0)}\frac{1}{(1-\epsilon)^2}$, therefore, 
$q(t)\rightarrow 0$ and, hence (see \eqref{a6}), 
\begin{equation*}
m(t)\rightarrow -\infty ~~{\rm as}~~ t\rightarrow T^-, 
\end{equation*}
where $T$ satisfies \eqref{g6}. 
On the other hand, \eqref{b4} dictates that $v_0(t;x)$ remains bounded for any $t\in[0,T^\prime]$, $T^\prime<T$, for any $x\in\mathbb{R}$. 
In other words, 
\begin{equation*}
\inf\limits_{x\in\mathbb{R}}u_x(t,x)\rightarrow-\infty~~ {\rm as}~~ t\rightarrow T^-,
\end{equation*}
but $u(t,x)$ is bounded for any  $(t, x)\in[0,T)\times\mathbb{R}$. This completes the proof.

\section*{Acknowledgment}
This work is supported by Yunnan Fundamental Research Projects (Grant NO. 202101AU070029).

\section*{Data availability statement}
Data sharing not applicable to this article as no datasets were generated or analysed during the current study.

\section*{Conflict of interest }
The authors declare that they have no conflict of interest.

\end{document}